\newtheorem{theorem}{Theorem}[section]
\newtheorem{lemma}{Lemma}[section]
\newtheorem{remark}{Remark}[section]
\numberwithin{figure}{section}
\numberwithin{table}{section}
\def\be{\begin{equation}}
\def\ee{\end{equation}}
\def\bes{\begin{equation*}}
\def\ees{\end{equation*}}
\def\beq{\begin{eqnarray}}
\def\eeq{\end{eqnarray}}
\def\beqs{\begin{eqnarray*}}
\def\eeqs{\end{eqnarray*}}
\def\bal{\begin{aligned}}
\def\eal{\end{aligned}}
\def\bsqs{\begin{subequations}}
\def\esqs{\end{subequations}}
\begin{document}
\title[A HDDG method for Poisson--Nernst--Planck systems]{A positivity-preserving hybrid DDG method for Poisson--Nernst--Planck systems
}
\author[H.~Liu, Z.-M. Wang and P.~Yin]{Hailiang Liu$^\dagger$, Zhongming Wang$^\ddagger$  and Peimeng Yin$^*$\\  \\
 }
\address{$^\dagger$ Mathematics Department, Iowa State University, Ames, IA 50011, USA} \email{hliu@iastate.edu}
\address{$^\ddagger$  Department of Mathematics and Statistics, Florida International University,  Miami, FL 33199, USA}\email{zwang6@fiu.edu}
\address{$^*$ Department of Mathematical Sciences, The University of Texas at El Paso, El Paso, TX 79968, USA}
\email{pyin@utep.edu}

\subjclass{35K40, 65M60, 65M12, 82C31.}
\keywords{Poisson-Nernst-Planck equation, positivity preserving, discontinuous Galerkin methods}
\begin{abstract} 
In earlier work [H. Liu and Z. Wang, J. Comput. Phys., 328(2017)], an arbitrary high-order conservative and energy-dissipative direct discontinuous Galerkin (DDG) scheme was developed. Although this scheme enforced solution positivity using cell averages as reference values, it lacked a theoretical guarantee for the positivity of those cell averages.  
In this study, we develop a novel arbitrary high-order DDG method with rigorously proven positivity-preserving properties. 
Specifically, the positivity of the cell averages is ensured through a modified numerical flux in combination with forward Euler time discretization. To achieve point-wise  positivity of ion concentrations, we introduce a hybrid  algorithm that integrates a positivity-preserving limiter. The proposed method is further extended to higher-dimensional problems with rectangular meshes.   Numerical results confirm the scheme's high-order  accuracy, guaranteed  positivity preservation, and consistent discrete energy dissipation.
\end{abstract}
\maketitle

\section{Introduction} 

This work continues our project, initiated in \cite{LW17}, aimed at developing a rigorous structure-preserving discontinuous Galerkin (DG) method for the 
Poisson--Nernst--Planck equations. In this paper, we focus on the PNP system given by 
\begin{subequations}\label{PNP}
\begin{align}
&\partial_t c_i= \nabla\cdot(\nabla c_i+q_ic_i\nabla\psi), \quad  x\in  \Omega, \; t>0, \quad i=1,\cdots, m,\\
&-\Delta \psi =  \sum_{i=1} ^m q_i c_i + \rho_0,  \quad x\in  \Omega, \; t>0, \\
&c_i(0,x)=c_i^{\rm in}(x), \quad x\in \Omega;  \quad \frac{\partial c_i}{\partial  \textbf{n}}+q_ic_i \frac{\partial \psi}{\partial  \textbf{n}} =0  \mbox{~~on~}  \partial\Omega, \; t>0,\label{PNPc} \\
& \psi =\psi_D \mbox{~~on~} \partial\Omega_D,  \mbox{~~and~} \frac{\partial \psi}{\partial  \textbf{n}}  =\sigma  \mbox{~~on~} \partial\Omega_N, \quad t>0,  \label{PNPd}
\end{align}
\end{subequations}
where $c_i=c_i(t, x) $ represents the concentration of the $i^{th}$ ionic species with charge $q_i$,
and is subject to the given 
initial and boundary conditions. The  electrostatic potential $\psi=\psi(t, x)$ is governed by the Poisson equation, with Neumann boundary conditions prescribed on $\partial\Omega_N$ and Dirichlet boundary 
conditions on $\partial\Omega_D$.  Here,  $\Omega \subset \mathbb{R}^d$ denotes a connected closed domain with a smooth boundary $\partial\Omega$,  $\textbf{n}$ is the unit outward normal vector on $\partial \Omega$.  

The PNP system (\ref{PNP}) is widely used across many disciplines \cite{Mo83, Ma86, MRS90, Je96, Da97, Li04, Gl42, Hi01, EL07}.
Among the key qualitative properties of its solution -- such as mass conservation, energy dissipation, and positivity of concentrations--  the preservation of positivity under high-order numerical approximations 
remains particularly challenging.

In this article,  we advance our previous work by focusing on the challenge of positivity preservation within the same  high-order DG approximation framework introduced in \cite{LW17}. 
A general discussion of this issue and background references are given in the introduction to \cite{LW17}.   Recall that the direct DG method for \eqref{PNP} proposed in \cite{LW17} 
is  based on the following reformulation: 
\begin{align*}
 \partial_t c_i=\nabla \cdot (c_i \nabla p_i),  \quad  p_i =q_i \psi +\log c_i, \;  \; i=1, \cdots, m, 
\end{align*}
where the electrostatic potential $\psi$ is determined from the coupled Poisson equation. This reformulation, when discretized using DG methods, was shown to naturally preserve both mass conservation and energy dissipation. 
A critical ingredient  of the method is the use of the direct DG (DDG) numerical fluxes on $p_i$, where the DDG fluxes are originally introduced in  \cite{LY09,LY10}),  of the form for $w=p_i$:   
\begin{align}\label{flux_intro}
\widehat{\partial_n w}:=\beta_0 \frac{[w]}{h} +\{\partial_n w\}+\beta_1h[\partial_n^2 w].
\end{align} 
Here  $n$ denotes the normal unit vector across cell interfaces, and $[q]=q^+-q^-$, and $\{q\}={(q^++q^-)}/{2}$, where $q^+$ and $q^-$ representing values of $q$ on either side of the interface. A broad range of parameter choices 
 for $\beta_0$ and $\beta_1$ allows for the scheme to retain high-order accuracy while ensuring energy dissipation. However, this flexibility does not generally guarantee the preservation of  solution positivity, which is not unexpected, achieving pointwise positivity in high-order schemes is well known to be unrealistic.
 
 A commonly accepted strategy, following Shu and Zhang \cite{ZS10}, is to ensure the positivity of cell averages and then apply  a positivity-preserving limiter to enforce solution positivity. Unfortunately, the numerical method proposed in \cite{LW17} may produce negative cell averages after a finite number of time steps,  thereby causing the positivity-preserving limiter to fail.
 
In this work, we aim to overcome this critical limitation identified in \cite{LW17}, by developing a scheme that theoretically guarantees positivity of cell averages, even in general settings.  
We have two main objectives in this work: 

i) To present a novel modified DDG numerical flux, and  

ii) To analyze the propagation of positive cell averages. 

The goal of (i) is to restore the positivity propagation of cell averages through a carefully  designed flux modification. 
Our modified flux is based on a local correction constructed similarly to those given for one-dimensional nonlinear Fokker-Planck equations \cite{CLY25}.
However, in our case, the correction is applied to  the logarithmic flux formulation 
$p_i=q_i \psi+\log c_i$.  Specifically, we define the modified normal derivative as:   
\begin{equation}
    \widetilde{\partial_n p_i}=\widehat{\partial_n p_i} + \frac{\widetilde\beta}{2}[c_i].
\label{mflux_intro}
\end{equation}

Here $ \tilde \beta$ is dimension-dependent and chosen to appropriately scale the correction.
 
For (ii), we analyze the evolution of cell averages under Euler time discretization.  In the one dimensional case, the update for the cell average 
 $\bar c_{ij}$ can be expressed as  
$$
\bar c_{ij}^{n+1} = \bar c_{ij}^n + \frac{\Delta t}{\Delta x} \{c_{ih}^n\} \left(\widehat{\partial_x p_{ih}^n} + \frac{\widetilde{\beta}_{i}}{2}[c_{ih}^n]\right)\Big|^{x_{j+\frac12}}_{x_{j-\frac12}},
$$
where the flux terms are evaluated at cell interfaces.  When using the modified flux (\ref{mflux_intro}) and applying the Gauss-Labotto decomposition of  $\bar c_{ij}^n$, we show that all the interface trace terms  
stay non-negative,  provided the time step is sufficiently small.  This leads to a key result: the propagation of positive cell averages -- that is, $\bar c_{ij}^{n+1} > 0$ if the current DG polynomials $c_{ih}^{n} > 0$ and the time step satisfies a suitable 
constraint. 

As a result,  we develop a hybrid DDG numerical algorithm that incorporates a positivity-preserving limiter,  similar to the one proposed in \cite{LY15},  
applied at each time step. This limiter enforces the positivity of the numerical solution without degrading the scheme's high-order accuracy.  Our analysis and numerical results demonstrate that the resulting positivity-preserving DDG scheme maintains arbitrary high-order accuracy in both one-dimensional and multi-dimensional settings.
\subsection*{Related work.} 
The development of high-order, positivity-preserving numerical methods for the PNP system remains relatively limited. A notable contribution  is  found in \cite{LWYY22}, where  the authors proposed a direct DG method that rigorously preserves the 
positivity of cell averages.  Their method relies on the use of the non-logarithmic reformulation and carefully constructed numerical fluxes to enforce positivity; however, the use of global fluxes limits the achievable order of accuracy, 
 making extensions to higher-order DG methods challenging.  In contrast, the local flux strategy developed  in this work, as well as in  \cite{CLY25},
 overcomes this limitation  and  attains spatial accuracy beyond third order.   

Similar ideas of using local flux corrections to enforce desirable solution properties, particularly positivity preservation, have been explored in other settings, such as \cite{Zh17} for the compressible Navier-Stokes equations. 
 
After our earlier work \cite{LW17},  which introduced a novel framework for achieving arbitrary high-order spatial accuracy while preserving key structural properties of the solution, there has been a growing body of research 
focused on designing numerical schemes that maintain essential solution features at the discrete level. These efforts typically involve reformulating the original PDE and adopting implicit-explicit time discretizations to balance stability and computational efficiency. 
Most of such schemes use central differencing for spatial discretization, typically resulting in second-order spatial accuracy. 

Common reformulation strategies include:  

(1) Non-logarithmic transformations,  referred to as the Slotboom transformation in semiconductor modeling -- convert the drift-diffusion operator into a self-adjoint elliptic operator.  This is particularly effective for ensuring positivity; see e.g., \cite{DingWangZhou_JCP2019,HPY19, DingWangZhou_JCP2020,HH2020,LM20b, LM21, LWYY22, ST22}. 

(2) Gradient flow structures of the form $\rho_t =\nabla \cdot(\rho \nabla \delta_\rho  E)$, where $E$ denotes an energy functional associated 
with the physical system, are often discretized using explicit-implicit schemes;  examples include the energetic variational approach (EnVarA) \cite{LiuC2021a,LWWYZ2023} and an independent development in  \cite{ShenXu_NM21}. In \cite{MXL16}, the authors propose a fully-implicit time discretization combined with a log-density formulation  to ensure positivity.  
 
(3) Localized gradient flow formulations,  such as  $\rho_t =\nabla \cdot(\rho \nabla p)$ with $p= \delta_\rho E$ (see \cite{LW17}) or $\rho_t =\nabla \cdot(\rho \xi )$  with $\xi = \nabla \delta_\rho E$ (see  \cite{CCH15,SCS18}), provide enhanced control over the solution structure over computational cells;  

(4) Fully implicit gradient flows, often framed as JKO-type schemes \cite{JKO98, KLX17},  pose significant computational challenges. These are effectively addressed by recasting them as dynamic optimization problems \cite{LM23}, following the Benamou-Brenier formulation \cite{BB00}. Notably, such dynamic optimal transport formulations can also be naturally derived using the Onsager variational principle \cite{CLX25}. 
These perspectives provide promising avenues for developing robust, structure-preserving numerical schemes.

We conclude this section by outlining the rest of the paper:  in Section 2, we review the DDG method introduced in \cite{LW17} and summarize its key 
solution properties, including mass conservation and the energy dissipation law.  Section 3 focuses on the positivity-preserving property of the fully discretized scheme, incorporating the proposed novel local flux correction  and the hybrid positivity-preserving algorithm in one dimension. In Section 4,  we extend the method to higher dimensions and prove the positivity propagation results. 
Section 5 presents numerical examples that illustrate the effectiveness of the proposed approach. Finally, Section 6 offers 
concluding remarks.

\section{Review of Direct DG Scheme in \cite{LW17}}
 In this section, we briefly revisit the DG scheme for the  PNP system (\ref{PNP}) in 1D as presented in \cite{LW17}.
Consider a domain $\Omega = [a,  b]$ with a possibly non-uniform mesh $\{I_j\}_{j=1}^N$, where the cell $I_j =(x_{j-1/2}, x_{j+1/2})$  with cell center
$x_j =(x_{j-1/2} +x_{j+1/2})/2$, and 
$$
a=x_{1/2}<x_1<\cdots <x_{N-1/2}<x_N<x_{N+1/2}=b.
$$

We define the discontinuous finite element space as
$$
V_h=\left\{v\in L^2(\Omega), \quad v|_{I_j}\in P^k(I_j), j=1, \cdots, N \right\},
$$
where $P^k$ denotes polynomials of degree at most $k$, and $h$ the characteristic length of the mesh. 
At cell interfaces $x=x_{j+1/2}$, we introduce the notation
\begin{equation*}
    v^{\pm} = \lim_{\epsilon \rightarrow 0}v(x\pm \epsilon), \quad \{v\} = \frac{v^-+v^+}{2}, \quad [v] = v^+- v^-.
\end{equation*}
The DDG scheme introduced in \cite{LW17} is essentially based  on the following localized gradient flow reformulation,  
\begin{subequations}\label{cqp}
\begin{align}
 \partial_t c_i&=\partial_x (c_i \partial_x p_i),  \; i=1, \cdots, m, \\
 p_i &=q_i \psi +\log c_i,\\
 -\partial_{x}^2\psi&=\sum_{i=1}^m q_ic_i +\rho_0(x).
\end{align}
\end{subequations}
More specifically,  such a scheme when coupled with the   first-order Euler time discretization with  uniform step size  $\Delta t$  admits the following form: We seek 
$c_{ih}^{n+1}, \psi^{n+1}_h \in V_h$ such that for any $v_i, r_i, \eta \in V_h$,
\begin{subequations}\label{dgEuler}
\begin{align}
& \int_{I_j} \frac{c_{ih}^{n+1}-c_{ih}^n}{\Delta t}  v_i dx =-\int_{I_j} c_{ih}^n \partial_x p_{ih}^n \partial_x v_i dx +\{c_{ih}^n\} \left( \widehat{\partial_x p_{ih}^n}v_i + (p_{ih}^n-\{p_{ih}^n\})
\partial_x v_{i}\right)\Big|_{\partial I_j}, \label{dgEulera}\\
 & \int_{I_j}p_{ih}^n r_i dx =\int_{I_j}(q_i \psi_h^n + \log c_{ih}^n )r_{i}dx,\\
 & \int_{I_j}  \partial_x \psi_{h}^n \partial_x \eta  dx - \left( \widehat { \partial_x \psi_{h}^n} \eta   +(\psi_h^n - \{\psi_h^n\})\partial_x \eta \right)\Big|_{\partial I_j}  = \int_{I_j}
  \left[\sum_{i=1}^m q_i c_{ih}^n +\rho_0\right]
 \eta dx,
\end{align}
\end{subequations}
where the numerical fluxes are $\widehat{\partial_x p_{ih}}=Fl(p_{ih})$ and $\widehat{ \partial_x\psi_{h}} =Fl(\psi_h)$. The flux operator $Fl$ at the cell interfaces $x_{j+1/2}$ for $0<j<N$ is defined as 
\begin{align}\label{flux}
Fl(w):=\beta_0 \frac{[w]}{h_{j+\frac12}} +\{\partial_xw\}+\beta_1h_{j+\frac12}[\partial_x^2 w],
\end{align}
where
$$
h_{j+\frac12}=x_{j+1}-x_j. 
$$

The boundary conditions are a critical component of the PNP model
and determine important qualitative behavior of the solution.  It is important to incorporate the boundary conditions properly. For the zero-flux condition $\partial_x c_i +q_i c_i \partial_x \psi=0$ at both $x=a$ and $x=b$, we set at both $x_{1/2}$ and $x_{N+1/2}$,
\begin{align}
& Fl(p_{ih})=0, \{p_{ih}\}=p_{ih},
\{c_{ih}\}=c_{ih}. 
\end{align} 
For $\psi$ with mixed boundary condition of form 
$\psi(t, a)=\psi_l, \partial_x\psi(t, b)=\sigma$, we set
\begin{align}
& Fl(\psi_{h})=\beta_0 (\psi_{h}^+ -\psi_l)/h_{1/2} + \partial_x \psi_{h}^+, \{\psi_{h}\}=\psi_{l}, \quad x_{1/2}=a,\\
& Fl(\psi_{h})=\sigma, \{\psi_{h}\}=\psi_{h}^-, \quad x_{N+1/2}=b, 
\end{align}
where 
$$
h_{1/2}=x_1-x_{1/2}. 
$$
Such a choice is motivated by the results in \cite{Liu21}; otherwise, one would have to take $2\beta_0$ to replace the usual coefficient $\beta_0$.
For other types of boundary conditions, numerical fluxes can be defined accordingly.

As usual, we initialize $c_{ih}^{{0}}$ by projecting $c_{i}^{\rm in}$ onto $V_h$ so that
\begin{equation}\label{proj}
\int_{I_j} c_{ih}^{0}(x) v(x)dx=\int_{I_j} c_{i}^{\rm in}(x)v(x)dx, \quad \forall v\in P^k(I_j), \quad i=1, \cdots, m.
\end{equation}

The following statement holds for the DG scheme \eqref{dgEuler} when subjected to a Neumann boundary condition.
\begin{theorem} {\t cf. \cite[Theorem 3.3]{LW17}}\label{thmEuler} 
\begin{itemize} 
\item[1.]  The fully discrete scheme (\ref{dgEuler}) conserves each  total concentration $c_{ih} ^n(x)(i=1, \cdots,  m)$ over time: 
\begin{align}
\label{conservediscrete}
 \sum_{j=1}^N \int_{I_j} c_{ih}^n  dx =   \sum_{j=1}^N \int_{I_j} c_{ih} ^{n+1} dx, \quad i=1,\cdots, m, \quad \quad t>0.
\end{align}
\item[2.] Assuming $c_{ih}^n(x)>0$ in each $I_j$, there exists $\mu^*>0$ such that if the mesh ratio $\mu=\frac{\Delta t}{\Delta x^2} \in (0, \mu^*)$, then the fully discrete free energy
\be 
\label{fullyDiscreteEnergy}
F^n =\sum_{j=1}^N  \int_{I_j}  \left[ \sum_{i=1}^m c_{ih} ^n{\rm log} c_{ih}^n+\frac{1}{2} \left( \sum_{i=1}^m q_i c_{ih}^n+\rho_0 \right) \psi_h^n \right] dx +\frac{1}{2}\int_{\partial \Omega}\sigma  \psi_h^n ds
\ee
satisfies
 \begin{align}
D_tF^n&\leq  - \frac{1}{2} \sum_{i=1}^m A_{c_{ih}^n}(p_{ih}^n, p_{ih}^n),
\label{fdEuler}
\end{align}
where the bilinear form 
$$
A_{c_{ih}^n}(p_{ih}^n, v_i) = \sum_{j=1}^N \int_{I_j} c_{ih}^n \partial_x p_{ih}^n \partial_x v_i dx + \sum_{j=0}^N \{c_{ih}^n\} \left(  \widehat{\partial_x p_{ih}^n}[v_i] + [p_{ih}^n]
\{\partial_x v_{i} \}\right)_{j+\frac{1}{2}}.
$$
Moreover,
 \begin{align}\label{ff}
F^{n+1}\leq F^n,
\end{align}
provided that $\beta_0$ is suitably large, and $\beta_1=0$ in $Fl(\psi_h)$  defined in (\ref{flux}).
\end{itemize}
\end{theorem}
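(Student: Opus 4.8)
The plan is to follow the standard structure-preserving DG template and treat the three assertions in turn; throughout, $\langle\cdot,\cdot\rangle$ denotes the $L^2(\Omega)$ inner product, $\rho^n=\sum_iq_ic_{ih}^n+\rho_0$, and $D_tF^n=(F^{n+1}-F^n)/\Delta t$. For \emph{mass conservation} \eqref{conservediscrete} I would take $v_i\equiv 1$ on each $I_j$ in \eqref{dgEulera}: the volume term and the term carrying $\partial_x v_i$ drop, leaving $\int_{I_j}(c_{ih}^{n+1}-c_{ih}^n)\,dx=\Delta t\,\{c_{ih}^n\}\,\widehat{\partial_x p_{ih}^n}\big|_{\partial I_j}$. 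Summing over $j=1,\dots,N$, single-valuedness of $\widehat{\partial_x p_{ih}^n}$ telescopes the interior interface terms, and the zero-flux prescription $Fl(p_{ih}^n)=0$ at $x_{1/2},x_{N+1/2}$ removes the endpoints. This part is routine.

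\emph{Discrete dissipation \eqref{fdEuler}.} I would split $F^n=S^n+E^n$, with $S^n=\sum_i\int_\Omega c_{ih}^n\log c_{ih}^n\,dx$ the entropy and $E^n$ the electrostatic part. For $S^n$, convexity of $s\mapsto s\log s$ gives $S^{n+1}-S^n\le\sum_i\int_\Omega(\log c_{ih}^{n+1}+1)(c_{ih}^{n+1}-c_{ih}^n)\,dx$; by mass conservation the constant integrates to zero, and since $p_{ih}^{n+1}-q_i\psi_h^{n+1}$ is exactly the $L^2(I_j)$-projection of $\log c_{ih}^{n+1}$ while $c_{ih}^{n+1}-c_{ih}^n\in V_h$, the bound equals $\sum_i\langle p_{ih}^{n+1},c_{ih}^{n+1}-c_{ih}^n\rangle-\langle\psi_h^{n+1},\rho^{n+1}-\rho^n\rangle$. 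Writing $p_{ih}^{n+1}=p_{ih}^n+(p_{ih}^{n+1}-p_{ih}^n)$ and invoking \eqref{dgEulera} with $v_i=p_{ih}^n$ together with cellwise summation by parts (the boundary contributions vanishing under the Neumann/zero-flux data), the $p_{ih}^n$ part becomes $-\Delta t\sum_iA_{c_{ih}^n}(p_{ih}^n,p_{ih}^n)$, leaving the time-discretization residual $R=\sum_i\langle p_{ih}^{n+1}-p_{ih}^n,c_{ih}^{n+1}-c_{ih}^n\rangle$. For $E^n$ I would use $E^n=\tfrac12 B(\psi_h^n,\psi_h^n)$, where $B$ is the DDG--Poisson bilinear form, symmetric precisely when $\beta_1=0$ in $Fl(\psi_h)$, expand the quadratic exactly, and use the discrete Poisson relations $B(\psi_h^n,\eta)=\langle\rho^n,\eta\rangle+\int_{\partial\Omega}\sigma\eta$ and $B(\delta\psi,\eta)=\langle\rho^{n+1}-\rho^n,\eta\rangle$ with $\delta\psi=\psi_h^{n+1}-\psi_h^n$. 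The terms in $\delta\psi$ combine against the entropy contribution, leaving
\[
F^{n+1}-F^n\le -\Delta t\sum_iA_{c_{ih}^n}(p_{ih}^n,p_{ih}^n)+R-\tfrac12 B(\delta\psi,\delta\psi).
\]
The same projection manipulation rewrites $R=B(\delta\psi,\delta\psi)+\sum_i\int_\Omega(\log c_{ih}^{n+1}-\log c_{ih}^n)(c_{ih}^{n+1}-c_{ih}^n)\,dx$, so the surviving residual $\sum_i\int_\Omega(\log c_{ih}^{n+1}-\log c_{ih}^n)(c_{ih}^{n+1}-c_{ih}^n)\,dx+\tfrac12 B(\delta\psi,\delta\psi)$ is nonnegative (the $\log$-term needs $c_{ih}^{n+1}>0$, which smallness of $\mu$ also secures from $c_{ih}^n>0$). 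Using inverse inequalities on $V_h$ to bound $\|c_{ih}^{n+1}-c_{ih}^n\|$ and $\|\delta\psi\|$ by $\Delta t$ times mesh-dependent factors acting on $\partial_xp_{ih}^n$ and $[p_{ih}^n]$, this residual is $O(\mu)$ times $\sum_iA_{c_{ih}^n}(p_{ih}^n,p_{ih}^n)$; then a sufficiently small $\mu^*$ (depending on $k$, $\beta_0$, $\min_j\min_{\bar I_j}c_{ih}^n$, and the Poisson data) makes it $\le\tfrac{\Delta t}{2}\sum_iA_{c_{ih}^n}(p_{ih}^n,p_{ih}^n)$, which gives \eqref{fdEuler}.

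\emph{Energy decay \eqref{ff}.} Given \eqref{fdEuler} it suffices to show $A_{c_{ih}^n}(p_{ih}^n,p_{ih}^n)\ge0$. With $\widehat{\partial_x p}$ from \eqref{flux}, the interface part of $A_c(v,v)$ equals $\sum_j\{c\}\big(\tfrac{\beta_0}{h}[v]^2+2[v]\{\partial_xv\}+\beta_1 h[\partial_x^2v][v]\big)$; completing the square against $\sum_j\int_{I_j}c(\partial_xv)^2\,dx$ with the standard trace/inverse inequalities on $P^k$ --- and using $\{c_{ih}^n\}>0$, since each $c_{ih}^n$ is a polynomial positive on the closed cell --- yields $A_{c_{ih}^n}\ge0$ once $\beta_0$ exceeds the usual DDG admissibility threshold (with $|\beta_1|$ in the admissible range). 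Hence $F^{n+1}-F^n\le-\tfrac{\Delta t}{2}\sum_iA_{c_{ih}^n}(p_{ih}^n,p_{ih}^n)\le0$.

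I expect the dissipation step to be the main obstacle: one must first recast the forward-Euler residual $R-\tfrac12 B(\delta\psi,\delta\psi)$ in a manifestly nonnegative form --- an entropy gap plus the Poisson quadratic --- and then absorb it into the spatial dissipation via inverse inequalities, which is precisely where the strictly positive lower bound $\min_{\bar I_j}c_{ih}^n>0$, and the need for $c_{ih}^{n+1}>0$ so that $\log c_{ih}^{n+1}$ is defined, enter and fix $\mu^*$. Handling the mixed/Neumann boundary terms for $\psi$ consistently with the flux choice motivated by \cite{Liu21} is the remaining bookkeeping point; and, as the present paper stresses, the cell-average positivity established later is exactly what removes the need to assume $c_{ih}^{n+1}>0$ separately.
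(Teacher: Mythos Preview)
This theorem is not proved in the present paper: it is quoted from \cite[Theorem~3.3]{LW17} as part of the review in Section~2, so there is no in-paper proof to compare your proposal against. That said, your outline is the standard argument and matches the structure of the original proof in \cite{LW17}: conservation by testing with $v_i\equiv 1$ and telescoping; the dissipation inequality by combining the convexity of $s\mapsto s\log s$ at the new time level, the $L^2$-projection identity for $p_{ih}$, the symmetric DDG--Poisson bilinear form (this is where $\beta_1=0$ in $Fl(\psi_h)$ enters), and absorption of the $O(\mu)$ forward-Euler residual into the dissipation via inverse inequalities; and finally coercivity of $A_{c_{ih}^n}$ for $\beta_0$ above the DDG admissibility threshold. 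The one subtlety you correctly flag---that the convexity step requires $c_{ih}^{n+1}>0$, which small $\mu$ delivers from $c_{ih}^n>0$ as a one-step result---is exactly why $\mu^*$ depends on $\min_j\min_{\bar I_j}c_{ih}^n$ and why the theorem is only a conditional one-step statement; removing that dependence is precisely the purpose of the modified flux introduced in Section~3.
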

A significant limitation in the above results is the assumption $c_{ih}^n>0$. It is well known that maintaining point-wise positivity directly through high order numerical approximations is unrealistic. A widely accepted approach, following Shu and Zhang \cite{ZS10}, is to ensure positivity of cell averages, and restore solution positivity by further applying  a positivity-preserving limiter. However, the numerical scheme \eqref{dgEuler} presented in \cite{LW17} can  produce negative cell averages in finite number of steps, ultimately causing the positivity-preserving limiter to fail.  This is confirmed in \Cref{1Dtestacc} and \Cref{2Dtestacc} of this paper for both 1D and 2D cases. Our main objective here is to introduce a remedy to ensure the positivity of numerical averages.

\section{A positivity-preserving flux}
The key ingredient is a locally modified flux $\widetilde{\partial_xp_{ih}}$ which is used to replace the original flux $\widehat{\partial_x p_{ih}^n}$ in \eqref{dgEulera} at each edge $\partial I_j$. The resulting DDG scheme becomes 
\begin{subequations}\label{fullyPDG}
\begin{align}
& \int_{I_j} \frac{c_{ih}^{n+1}-c_{ih}^n}{\Delta t} v_i dx =-\int_{I_j} c_{ih}^n \partial_x p_{ih}^n \partial_x v_i dx +\{c_{ih}^n\} \left(  \widetilde{\partial_x p_{ih}^n}v_i + (p_{ih}^n-\{p_{ih}^n\})
\partial_x v_{i}\right)\Big|_{\partial I_j}, \label{fullyPDGa} \\
 & \int_{I_j}p_{ih}^n r_i dx =\int_{I_j}(q_i \psi_h^n + \log c_{ih}^n )r_{i}dx, \label{fullyPDGb}\\ 
 & \int_{I_j}  \partial_x \psi_{h}^n \partial_x \eta  dx - \left( \widehat { \partial_x \psi_{h}^n} \eta   +(\psi_h^n - \{\psi_h^n\})\partial_x \eta \right)\Big|_{\partial I_j}  = \int_{I_j}
  \left[\sum_{i=1}^m q_i c_{ih}^n +\rho_0\right] 
 \eta dx, \label{fullyPDGc}
\end{align}
\end{subequations}
where the newly introduced flux at the cell interface $x_{j+\frac12}$ for $0<j<N$ is given by
\begin{equation}
\widetilde{\partial_xp_{ih}}=\widehat{\partial_x p_{ih}} + \frac{\widetilde{\beta}_{i}}{2}[c_{ih}],
\label{Pflux}
\end{equation}
and 
\begin{equation}\label{defbeta12}
    \widetilde{\beta}_{i}= \left\{\begin{array}{cl}
        \frac{|\widehat{\partial_x p_{ih}^n}|}{\{c_{ih}^n\}} \Big|_{x_{j+\frac12}}, &\text{ if } \{c_{ih}^n\} > 0; \\
0, &\text{ if } \{c_{ih}^n\} = 0.
\end{array}\right.
\end{equation}
For $j=0$ or $j=N$, no modification is imposed so that  
$$
\widetilde{\partial_xp_{ih}}=\widehat{\partial_x p_{ih}}=0, \quad \tilde \beta_i=0.
$$

\subsection{Positive  cell averages} 
We define the cell average on cell $I_j$ as 
$$
\bar{c}_{ij}^{n} := \frac{1}{\Delta x_j}\int_{I_j}c_{ih}^{n}dx,\quad \Delta x_j={x_{j+\frac12}-x_{j-\frac12}}, 
$$
and establish the following result: 

\begin{theorem}\label{THMfullyPDG}
For the fully discrete scheme \eqref{fullyPDG}, the cell average $\bar c_{ij}^{n+1} > 0$ provided $c_{ih}^n(x)>0$ and 

\begin{align}\label{CFL1D}
& \Delta t \leq \omega_1 \min_{1\leq j\leq N}
\left\{
\Delta x_j \left.|\widehat{\partial_x p_{ih}^n}|^{-1}\right|_{x_{j\pm\frac12}}
\right\}
\end{align}
is satisfied. Here, $\omega_1$ represents the first weight of the Gauss-Lobatto quadrature rules with $M \geq \frac{k+3}{2}$ points. For notational convenience, we include the terms for $j=0, N$ with $\left.|\widehat{\partial_x p_{ih}^n}|^{-1}\right|_{x_{j+1/2}} = \infty$.
\end{theorem}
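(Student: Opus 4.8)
The plan is to derive a closed recursion for the cell average by testing \eqref{fullyPDGa} against the constant function, to recognize that the local correction \eqref{Pflux}--\eqref{defbeta12} turns the numerical flux into an upwind flux for the conservative form $\partial_t c_i=\partial_x(c_i\partial_x p_i)$, and then to absorb the two ``outflow'' boundary traces of $c_{ih}^n|_{I_j}$ by means of a Gauss--Lobatto decomposition of $\bar c_{ij}^n$.

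First I would set $v_i\equiv 1$ in \eqref{fullyPDGa}: the volume integral and the term proportional to $\partial_x v_i$ vanish, leaving
\begin{align*}
\bar c_{ij}^{n+1}=\bar c_{ij}^{n}+\frac{\Delta t}{\Delta x_j}\left(\{c_{ih}^n\}\,\widetilde{\partial_x p_{ih}^n}\Big|_{x_{j+\frac12}}-\{c_{ih}^n\}\,\widetilde{\partial_x p_{ih}^n}\Big|_{x_{j-\frac12}}\right).
\end{align*}
Next, at an interior interface I would write $a:=\widehat{\partial_x p_{ih}^n}$, $a^{\pm}:=\max(\pm a,0)\ge 0$, and let $c^{\pm}$ denote the one-sided traces of $c_{ih}^n$. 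Using \eqref{defbeta12} together with $[c]=c^+-c^-$ and $\{c\}=\tfrac12(c^++c^-)$ gives the key algebraic identity
\begin{align*}
\{c_{ih}^n\}\,\widetilde{\partial_x p_{ih}^n}=\{c_{ih}^n\}\,a+\tfrac{|a|}{2}[c_{ih}^n]=a^{+}c^{+}-a^{-}c^{-},
\end{align*}
i.e.\ the modification upwinds the flux with respect to the sign of $\widehat{\partial_x p_{ih}^n}$; when $\{c_{ih}^n\}=0$ both sides are $0$ (since then $\tilde\beta_i=0$ by \eqref{defbeta12}), so no exception arises, and the outer edges $j=0,N$ carry zero flux and are covered by the stated convention $\left.|\widehat{\partial_x p_{ih}^n}|^{-1}\right|_{x_{j+1/2}}=\infty$.

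I would then substitute this identity into the recursion and split the four resulting trace terms by sign. At $x_{j+\frac12}$ the term $a^{+}c^{+}$ involves the trace from $I_{j+1}$ with nonnegative coefficient, and at $x_{j-\frac12}$ the term $+a^{-}c^{-}$ involves the trace from $I_{j-1}$ with nonnegative coefficient; since $c_{ih}^n>0$, dropping these ``inflow'' contributions only decreases the right-hand side, which yields
\begin{align*}
\bar c_{ij}^{n+1}\ \ge\ \bar c_{ij}^{n}-\frac{\Delta t}{\Delta x_j}\Big(\left.|\widehat{\partial_x p_{ih}^n}|\right|_{x_{j+\frac12}}c_{ih}^n(x_{j+\frac12}^-)+\left.|\widehat{\partial_x p_{ih}^n}|\right|_{x_{j-\frac12}}c_{ih}^n(x_{j-\frac12}^+)\Big),
\end{align*}
where the two remaining traces are one-sided limits of $c_{ih}^n|_{I_j}$ itself and $a^{\pm}\le|a|$ was used. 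Finally, since $c_{ih}^n|_{I_j}\in P^k(I_j)$ and the $M$-point Gauss--Lobatto rule on $I_j$ is exact for polynomials of degree $\le 2M-3\ge k$ (because $M\ge(k+3)/2$) and includes both endpoints with equal first and last normalized weights $\omega_1=\omega_M>0$, I would decompose
\begin{align*}
\bar c_{ij}^{n}=\omega_1\,c_{ih}^n(x_{j-\frac12}^+)+\omega_1\,c_{ih}^n(x_{j+\frac12}^-)+\sum_{\ell=2}^{M-1}\omega_\ell\,c_{ih}^n(y_\ell),
\end{align*}
with all $\omega_\ell>0$ and the interior sum strictly positive. Combining the last two displays, the two endpoint traces are collected with coefficients $\omega_1-\frac{\Delta t}{\Delta x_j}\left.|\widehat{\partial_x p_{ih}^n}|\right|_{x_{j\pm\frac12}}$, which are nonnegative exactly under \eqref{CFL1D}; together with the strictly positive interior remainder this gives $\bar c_{ij}^{n+1}>0$. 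The boundary cells $j=1,N$ go through verbatim, one outflow trace being absent because $\widehat{\partial_x p_{ih}}=0$ on the outer edge.

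The hard part is essentially a single conceptual step---the identity in the second paragraph, namely that the correction \eqref{defbeta12} is exactly what converts $\{c_{ih}^n\}\widehat{\partial_x p_{ih}^n}$ into an upwind flux; once this is seen, the rest is bookkeeping. The places requiring care are: fixing the orientation of the jumps $[\cdot]$ so that \eqref{defbeta12} upwinds rather than downwinds; confirming the degree-of-exactness count $2M-3\ge k$ so the Gauss--Lobatto rule integrates $c_{ih}^n$ exactly; and matching the boundary cells to the infinite-reciprocal convention. The degenerate low-order case $M=2$ (no interior quadrature nodes, hence no automatic strictness when \eqref{CFL1D} holds with equality) is recovered either with the strict form of \eqref{CFL1D} or by noting that $c_{ih}^n>0$ on the open cell already forces $\bar c_{ij}^n$ to strictly exceed its two endpoint contributions; I would flag this in passing.
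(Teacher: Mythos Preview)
Your proof is correct and follows essentially the same strategy as the paper: test \eqref{fullyPDGa} with a constant, expand the modified interface flux, and absorb the two in-cell endpoint traces via a Gauss--Lobatto decomposition of $\bar c_{ij}^n$ under the CFL restriction \eqref{CFL1D}. The one conceptual addition you make is worth noting: you recognize explicitly that \eqref{defbeta12} converts $\{c_{ih}^n\}\widehat{\partial_x p_{ih}^n}$ into the upwind flux $a^{+}c^{+}-a^{-}c^{-}$, after which you discard the two nonnegative inflow contributions to obtain a lower bound. The paper instead retains all four trace terms in an exact identity and verifies the sign of each coefficient directly via $\widehat{\partial_x p_{ih}^n}\pm\widetilde\beta_i\{c_{ih}^n\}=a\pm|a|$. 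The two computations are algebraically equivalent; your presentation is a bit shorter, while the paper's exact decomposition makes visible the (nonnegative) contribution from neighboring cells, which can help with the $M=2$ strictness issue you correctly flag.
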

\begin{proof}
We define
\begin{align*}
\lambda_j :=\frac{\Delta t}{(\Delta x_j)^2}.
\end{align*}
Taking $v_i = \frac{\Delta t}{\Delta x_j}$ in \eqref{fullyPDGa}, the time evolution of $\bar c_{ij}^{n+1}$ on $I_j$ is given by
\begin{align*}
    \bar c_{ij}^{n+1} =& \bar c_{ij}^n + \lambda_j\{c_{ih}^n\} \Delta x_j \left(\widehat{\partial_x p_{ih}^n} + \frac{\widetilde{\beta}_{i}}{2}[c_{ih}^n]\right)\Big|_{x_{j+\frac12}}
    -\lambda_j \Delta x_j \{c_{ih}^n\}\left(\widehat{\partial_x p_{ih}^n} + \frac{\widetilde{\beta}_{i}}{2}[c_{ih}^n]\right)\Big|_{x_{j-\frac12}}\\
    =& \sum_{m=1}^M \omega_mc_{ih}^n(x_m^*) + \frac{\lambda_j}{2}\Delta x_j \left(\widehat{\partial_x p_{ih}^n} + \widetilde{\beta}_{i}\{c_{ih}^n\}\right)\Big|_{x_{j+\frac12}}c_{ih}^n(x_{j+\frac12}^+) \\
    & \hspace{2.3cm} +\frac{\lambda_j}{2} \Delta x_j \left(\widehat{\partial_xp_{ih}^n} - \widetilde{\beta}_{i}\{c_{ih}^n\}\right)\Big|_{x_{j+\frac12}}c_{ih}^n(x_{j+\frac12}^-)\\
&\hspace{2.3cm}-\frac{\lambda_j}{2} \Delta x_j \left(\widehat{\partial_x p_{ih}^n} + \widetilde{\beta}_{i}\{c_{ih}^n\}\right)\Big|_{x_{j-\frac12}} c_{ih}^n(x_{j-\frac12}^+) \\
&\hspace{2.3cm} +\frac{\lambda_j}{2} \Delta x_j \left(-\widehat{\partial_xp_{ih}^n} + \widetilde{\beta}_{i}\{c_{ih}^n\}\right)\Big|_{x_{j-\frac12}}c_{ih}^n(x_{j-\frac12}^-),
\end{align*}
where the $M$ pairs $(\omega_m, x_m^*)$ are given by the Gauss-Lobatto quadrature rules with $M \geq \frac{k+3}{2}$ that are exact for $k$-th polynomials and
\[
x_1^* = x_{j-\frac12}, \quad x_M^* = x_{j+\frac12}.
\]
Thanks to the choice of $\widetilde{\beta}_{i}$ in \eqref{defbeta12},
 we have
\[
    \left(\widehat{\partial_x p_{ih}^n} + \widetilde{\beta}_{i}\{c_{ih}^n\}\right) \Big|_{x_{j+\frac12}} \geq 0, \quad \left(-\widehat{\partial_x p_{ih}^n} + \widetilde{\beta}_{i}\{c_{ih}^n\}\right)\Big|_{x_{j-\frac12}} \geq 0,
\] 
for $1\leq j\leq N$. 

It follows that
\begin{align*}
\bar c_{ij}^{n+1} =& \sum_{m=2}^{M-1} \omega_m c_{ih}^n(x_m^*)
+ \frac{\lambda_j}{2}\Delta x_j \left( \widehat{\partial_xp_{ih}^n} + \widetilde{\beta}_{i}\{c_{ih}^n\}\right)c_{ih}^n(x_{j+\frac12}^+)\\
&\hspace{2.6cm}
+ \frac{\lambda_j}{2} \Delta x_j \left(-\widehat{\partial_xp_{ih}^n} + \widetilde{\beta}_{i}\{c_{ih}^n\}\right)c_{ih}^n(x_{j-\frac12}^-)\\
&\hspace{2.6cm}+ \left[\omega_1 - \frac{\lambda_j}{2} \Delta x_j (\widehat{\partial_x p_{ih}^n} + \widetilde{\beta}_{i}\{c_{ih}^n\})\right]c_{ih}^n(x_{j-\frac12}^+)\\
&\hspace{2.6cm}+ \left[\omega_M - \frac{\lambda_j}{2} \Delta x_j (-\widehat{\partial_x p_{ih}^n} + \widetilde{\beta}_{i}\{c_{ih}^n\})\right]c_{ih}^n(x_{j+\frac12}^-)>0,
\end{align*}
provided
\[
    \lambda_j \leq \min \left\{\frac{\omega_1}{\Delta x_j} \left.|\widehat{\partial_x p_{ih}^n}|^{-1}\right|_{x_{j-\frac12}},   \frac{\omega_1}{\Delta x_j} \left.|\widehat{\partial_x p_{ih}^n}|^{-1}\right|_{x_{j+\frac12}} \right\}
\]
for $2\leq j \leq N-1$, and 
\begin{align*}
  &  \lambda_1 \leq \frac{\omega_1}{\Delta x_1} |\widehat{\partial_x p_{ih}^n}|^{-1}\Big|_{x_{\frac32}} = \min \left\{\frac{\omega_1}{\Delta x_1} \left.|\widehat{\partial_x p_{ih}^n}|^{-1}\right|_{x_{\frac12}},   \frac{\omega_1}{\Delta x_1} \left.|\widehat{\partial_x p_{ih}^n}|^{-1}\right|_{x_{\frac{3}{2}}} \right\},\\
    &  \lambda_N \leq \frac{\omega_1}{\Delta x_N} |\widehat{\partial_x p_{ih}^n}|^{-1}\Big|_{x_{N-\frac12}} = \min \left\{\frac{\omega_1}{\Delta x_N} \left.|\widehat{\partial_x p_{ih}^n}|^{-1}\right|_{x_{N-\frac12}},   \frac{\omega_1}{\Delta x_N} \left.|\widehat{\partial_x p_{ih}^n}|^{-1}\right|_{x_{N+\frac12}} \right\},
\end{align*} 
where the equalities follow from the fact that
$$
\left.|\widehat{\partial_x p_{ih}^n}|^{-1}\right|_{x_{1/2}} = \left.|\widehat{\partial_x p_{ih}^n}|^{-1}\right|_{x_{N+1/2}} = \infty.
$$
Note that $\omega_1 = \omega_M$ due to the symmetry of the Gauss-Lobatto quadrature rules. 
\end{proof}

\subsection{Positivity-preserving reconstruction}  \label{recon}
The equation \eqref{fullyPDGb} involves the term \( \log c_{ih}^n \), which requires the concentrations \( c_{ih}^n \) to remain positive at each time step. However,   \Cref{THMfullyPDG} only guarantees positive cell averages each step, provided the previous  profile is positive. To ensure the propagation of positivity over time, we apply an accuracy-preserving  positive limiter  based on positive cell averages.

Let \( w_h \in P^k(I_j) \) approximate a smooth function \( w(x) \geq 0 \), with cell averages \( \bar{w}_j > \delta \), where \( \delta \) is a small positive parameter or zero. We define the modified polynomial   \( w_h^\delta(x) \in P^k(I_j) \) as follows:  

\begin{equation}\label{limiter}
w_h^\delta(x) =
\begin{cases}
\bar{w}_j + \frac{\bar{w}_j - \delta}{\bar{w}_j - \min_{I_j} w_h(x)} (w_h(x) - \bar{w}_j), & \text{if } \min_{I_j} w_h(x) < \delta, \\
w_h(x), & \text{otherwise}.
\end{cases}
\end{equation}
This reconstruction preserves the original cell averages and ensures 
that 
\[
\min_{I_j} w_h^\delta(x) \geq \delta.
\]
More importantly, the following estimate demonstrates how the choice of $\delta$  affects the reconstruction  accuracy. 
\begin{lemma} {\t cf. \cite[Lemma 4.2]{LW17}}  
 If \( \bar{w}_j > \delta \), the reconstructed polynomial \( w_h^\delta \) satisfies the following estimate  
\[
|w_h^\delta(x) - w_h(x)| \leq C(k) \big( \| w_h(x) - w(x) \|_\infty + \delta \big), \quad \forall x \in I_j,
\]
where \( C(k) \) is a constant depending on \( k \). 
\end{lemma}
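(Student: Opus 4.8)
The plan is to control the correction $w_h^\delta(x)-w_h(x)$ by splitting into the two cases of the definition \eqref{limiter}. In the trivial case $\min_{I_j} w_h \ge \delta$, the difference is zero and nothing is to be proved, so I focus on the case $\min_{I_j} w_h(x)<\delta$. There the difference is
\[
w_h^\delta(x)-w_h(x) = \left(\frac{\bar w_j-\delta}{\bar w_j-\min_{I_j}w_h}-1\right)(w_h(x)-\bar w_j)
= \frac{\delta-\min_{I_j}w_h}{\bar w_j-\min_{I_j}w_h}\,\bigl(\bar w_j - w_h(x)\bigr).
\]
So the two quantities to estimate are the scalar factor $\theta_j:=\dfrac{\delta-\min_{I_j}w_h}{\bar w_j-\min_{I_j}w_h}$ and the polynomial deviation $\bar w_j-w_h(x)$ on $I_j$.

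First I would bound the deviation from the mean: since $w(x)\ge 0$ is smooth with cell average $\bar w_j>\delta\ge 0$, and $w_h$ approximates $w$, write $\bar w_j - w_h(x) = (\bar w_j - \bar w(I_j)) + (\bar w(I_j)-w(x)) + (w(x)-w_h(x))$ where $\bar w(I_j)$ is the exact average of $w$ on $I_j$; the first term is itself an average of $w-w_h$ hence bounded by $\|w_h-w\|_\infty$, the third by $\|w_h-w\|_\infty$, and the middle term $\bar w(I_j)-w(x)$ is $O(h)$ by smoothness — but more robustly one simply uses that $w_h-\bar w_j$ and $w-\bar w(I_j)$ differ by something controlled by $\|w_h-w\|_\infty$ together with the fact that $\min_{I_j} w\ge 0$, so $|\bar w_j - w_h(x)|$ is comparable to $\bar w_j$ up to $\|w_h-w\|_\infty$. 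A cleaner route, and the one I would actually push through: since $w\ge 0$ on $I_j$, $\min_{I_j} w_h \ge \min_{I_j} w - \|w_h-w\|_\infty \ge -\|w_h-w\|_\infty$, hence $\bar w_j - \min_{I_j} w_h \le \bar w_j + \|w_h-w\|_\infty$ and also $\delta - \min_{I_j}w_h \le \delta + \|w_h-w\|_\infty$. Combined with $\min_{I_j} w_h < \delta$ (the case hypothesis) and $\bar w_j>\delta$, one gets $0\le \theta_j \le \dfrac{\delta+\|w_h-w\|_\infty}{\bar w_j - \min_{I_j} w_h}$.

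Next, the key cancellation: the factor $\bar w_j-\min_{I_j} w_h$ appears in the denominator of $\theta_j$ and, when I multiply $\theta_j$ by $|\bar w_j - w_h(x)|$, I estimate $|\bar w_j - w_h(x)| \le |\bar w_j - \min_{I_j} w_h|$ — this is the crucial inverse-type inequality, valid because $\|w_h - \bar w_j\|_{\infty,I_j} \le C(k)\,\|w_h-\bar w_j\|_{\infty,I_j}$ is trivial but what is actually needed is $\max_{I_j}|w_h-\bar w_j|\le C(k)(\bar w_j - \min_{I_j} w_h)$, which follows from the finite-dimensionality of $P^k(I_j)$ (equivalence of norms / a standard inverse estimate relating $\max|w_h-\bar w_j|$ to $\bar w_j-\min w_h$ on a fixed reference cell, with a $k$-dependent constant). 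Feeding this in,
\[
|w_h^\delta(x)-w_h(x)| = \theta_j\,|\bar w_j - w_h(x)| \le C(k)\,\theta_j\,(\bar w_j-\min_{I_j}w_h) \le C(k)\bigl(\delta + \|w_h-w\|_\infty\bigr),
\]
which is exactly the claimed bound after relabeling $C(k)$.

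The main obstacle is the inverse inequality $\max_{I_j}|w_h-\bar w_j| \le C(k)\,(\bar w_j - \min_{I_j} w_h)$ for $w_h\in P^k(I_j)$: one must check that the quantities $\|v\|_{\infty}$ and $\bar v - \min v$ (for $v=w_h-\bar w_j$, which has zero mean) are equivalent norms on the finite-dimensional space of mean-zero polynomials of degree $\le k$ on the reference interval — the only subtlety being that $\bar v - \min v = -\min v = \max(-v)$ is a norm on that space (it vanishes only if $v\equiv 0$, using that a nonzero mean-zero polynomial must take a strictly negative value), and scaling to $I_j$ leaves the constant $C(k)$ dimensionless. Once that is in hand, the rest is the elementary algebra above, and the two cases of \eqref{limiter} are glued trivially since the first case contributes $0$.
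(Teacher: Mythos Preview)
The paper does not actually prove this lemma; it is stated with a citation to \cite[Lemma~4.2]{LW17} and no proof is supplied here, so there is no in-paper argument to compare against.

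Your argument is correct and is essentially the standard one for this type of limiter estimate. One small terminological point: the functional $v\mapsto -\min_{I_j} v$ on mean-zero polynomials is not a norm (it fails absolute homogeneity, since $-\min(-v)=\max v\ne -\min v$ in general), so you cannot literally invoke ``equivalence of norms.'' What you actually need, and what you in effect argue, is the one-sided bound $\|v\|_\infty \le C(k)\,(-\min v)$: this follows because $-\min v$ is continuous, positively homogeneous of degree one, and strictly positive on nonzero mean-zero $v$ (your observation that a nonzero mean-zero polynomial must take a strictly negative value), so on the compact set $\{\|v\|_\infty=1\}$ it attains a positive minimum, and scaling to the reference interval makes $C(k)$ cell-independent. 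With that adjustment in wording, your chain
\[
|w_h^\delta(x)-w_h(x)| \le C(k)\,\theta_j\bigl(\bar w_j-\min_{I_j}w_h\bigr) = C(k)\bigl(\delta-\min_{I_j}w_h\bigr) \le C(k)\bigl(\delta+\|w_h-w\|_\infty\bigr)
\]
is exactly the intended proof.
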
  
This result guarantees that \( w_h^\delta(x) \) maintains accuracy when \( \delta < h^{k+1} \).   
Such reconstruction limiting techniques are inspired by the limiter introduced in \cite{ZS10} for hyperbolic conservation laws.

\subsection{The hybrid positivity-preserving algorithm}

The algorithm is summarized as follows:

\normalem
    \begin{algorithm}[ht]
        \caption{The hybrid positivity-preserving algorithm for the PNP system}
        \label{alg:HDDG}
    \KwIn{Initial data \(c_i^{\rm in}(x)\), and the boundary condition for $\psi$}
    \KwOut{Concentration $c_{ih}^n$} 
        Initialization for \(c_{ih}^0\):  
    Project \(c_i^{\rm in}(x)\) onto \(V_h\), as formulated in (\ref{proj}), to obtain \(c_{ih}^0(x)\).\\
        \For{$n=0$ \KwTo $\lfloor T/\Delta t \rfloor$}{
            Reconstruction:  
        Ensure positivity in each cell by reconstructing \(c_{ih}^n\) following \eqref{limiter} if necessary, such that \(c_{ih}^n > \delta\).\\
            Solve for \(\psi_h^n\):  
        Using \(c_{ih}^n\), solve (\ref{fullyPDGc}) to compute \(\psi_h^n\).\\
        Solve for \(p_{ih}^n\):
        Using $c_{ih}^{n}$ and $\psi_{h}^{n}$ to obtain $p_{ih}^n$ from \eqref{fullyPDGb}.\\
        Solve for \(c_{ih}^{n+1}\):  
        Solve (\ref{fullyPDGa}) to obtain \(c_{ih}^{n+1}\).
        }
    \end{algorithm}

\begin{remark}\label{rem:CFL}
We provide some remarks concerning the numerical implementation.
\begin{itemize}
\item[1.]  
We assume the mesh is regular, satisfying $h_{\max} \leq Ch_{\min}$ for some constant $C$, where $h_{\max}$ and $ h_{\min}$ denote the maximum and minimum element diameters, respectively. Let $h$ be the characteristic length of the mesh size, 
then we have  $\widehat{\partial_x p_{ih}^n} \approx \mathcal{O}(h^{-1})$. 
From the CFL condition \eqref{CFL1D}, we observe that
\begin{equation}
    \Delta t \leq \mathcal{O}(h^2),
\end{equation}
which is consistent with the expected condition on the explicit time step for diffusion problems.

\item[2.] Due to the presence of a logarithmic term in the DG scheme \eqref{fullyPDGb}, we additionally require that the cell average
\be
\bar c_{ij}^{n+1} \geq \varepsilon, \notag
\ee
for some tolerance $\varepsilon \in (0, C_\eta h^{k+1})$, where $C_\eta$ is a constant. As a result, the time step is chosen numerically to satisfy
\begin{equation}\label{CFL1D+}
    \Delta t = \gamma\lambda h^2,
\end{equation}
where $\lambda = \frac{\omega_1}{h} \min_{0\leq j\leq N}
\left\{|\widehat{\partial_x p_{ih}^n}|^{-1}\Big|_{x_{j+1/2}}
\right\}$ , and $\gamma \in (0,1]$ is a empirical parameter,  which we investigate numerically in \Cref{sec:2d}. 
\item[3.] The time discretization in the scheme \eqref{fullyPDG} can be implemented using high order strong stability preserving Runge-Kutta schemes \cite{GST2001}, as they are positive linear combination of the forward Euler method used in \eqref{fullyPDG}, hence positive-preserving property remains valid when taking suitably small time steps.

\end{itemize}

\end{remark}

\begin{remark}
Although not formally proven, our extensive 1D and 2D numerical experiments in  \Cref{sec:num} show that the numerical solution produced from Algorithm \ref{alg:HDDG} exhibits the property of discrete mass conservation and free energy dissipation.   
\end{remark}

\section{High-dimensional positivity-preserving DG schemes}

In this section, we extend our result to DG schemes of $(k+1)$-th order accuracy on rectangular meshes solving multidimensional (multi-D) PNP equations. 

\subsection{Scheme formulation}
Assume that \eqref{PNP} is posed on $x=(x^1, \ldots, x^d) \in \Omega = \Pi_{l=1}^d [L_{x^l}, R_{x^l}] \subset \mathbb{R}^d$, where $L_{x^l}$, $R_{x^l}$ are endpoints in $x^l$ direction satisfying $L_{x^l} < R_{x^l}$.
Consider a rectangular partition $\mathcal{T}_h$ of 
\begin{equation}
    \Omega = \bigcup_{\alpha}^N K_\alpha, 
\end{equation}
where $\alpha=(\alpha_1, \ldots, \alpha_d)$, $N=(N_1, \ldots, N_d)$. Here, $K_\alpha = I_{\alpha_1}^1 \times \ldots \times I_{\alpha_d}^d$ with $I_{\alpha_l}^l = [x^l_{\alpha_l-1/2}, x^l_{\alpha_l+1/2}]$ and its center $x^l_{\alpha_l} = (x^l_{\alpha_l-1/2}+x^l_{\alpha_l+1/2})/2$, for $\alpha_l = 1, \ldots, N_l$.

We define the discontinuous finite element space as the space of the tensor product of piecewise polynomials of degree at most $k$ in each variable on every element, 
\[
V_h = \{v \in L^2(\Omega),\  v|_{K_\alpha} \in Q^k(K_\alpha), \ \forall \alpha=1, \ldots, N\},
\]
where $Q^k$ denotes the tensor producto of $P^k$ polynomials in each direction.
Similar to the one-dimensional case, we introduce the following notation at cell interfaces $x^l = x^l_{\alpha_l + 1/2}$,
\begin{equation*}
v^\pm = \lim_{\epsilon \rightarrow 0} v(x^1,\ldots, x^l\pm \epsilon, \ldots, x^d), \quad  \{v\} = \frac{v^-+v^+}{2}, \quad [v] = v^+- v^-.
\end{equation*}

The fully discrete DG scheme on each computational cell is to find $c_{ih}^{n+1}, \psi^{n+1}_h \in V_h$ such that for any $v_i, r_i, \eta \in V_h$,
\begin{subequations}\label{fullyPDGMD}
\begin{align}
\int_{K_\alpha}D_t c_{ih}^{n}v_i\,dx
=&  -\int_{K_\alpha} c_{ih}^n\nabla p_{ih}^n\cdot\nabla v_i\,dx \notag\\
& \hspace{-1cm}   + \sum_{l=1}^d \int_{K_\alpha \backslash I_{\alpha_l}^l}  \{c_{ih}^n\} \left(\widetilde{\partial_{x^l} p_{ih}^n}v_i +(p_{ih}^n -
\{p_{ih}^n\})\partial_{x^l} v_i\right)\,\Big|_{x^{l,+}_{\alpha_l-1/2}}^{x^{l,-}_{\alpha_l+1/2}}d(x\backslash x^l),\label{fullyMD1} \\
\int_{K_\alpha}p_{ih} r_i\,dx = & \int_{K_\alpha}\left( q_{i} \phi_h^n + \log c_{ih}^n  \right)r_i\,dx, \label{fullyMD2} \\
\int_{K_\alpha} \nabla \psi_{h}^n\cdot\nabla \eta\,dx 
& - \sum_{l=1}^d \int_{K_\alpha \backslash I_{\alpha_l}^l} 
\left(\widehat{\partial_{x^l} \psi_{h}^n}\eta +(\psi_{h}^n -\{\psi_{h}^n\})\partial_{x^l} \eta\right)\,\Big|_{x^{l,+}_{\alpha_l-1/2}}^{x^{l,-}_{\alpha_l+1/2}}d(x\backslash x^l) \nonumber\\
= & \int_{K_\alpha}  \left( \sum_{i=1}^m q_i c_{ih}^n + \rho_0  \right) \eta  \,dx, \label{fullyMD3}
\end{align}
\end{subequations}
where at interior interface $(x^1, \ldots, x^l_{\alpha_l+\frac12}, \ldots, x^d)$, 
the DDG numerical fluxes are given by
\begin{align}
    \widehat{\partial_{x^l} w} = \beta_0 \frac{[w]}{h^{l}_{\alpha_l+\frac12}} +\{\partial_{x^l} w\}+\beta_1h^{l}_{\alpha^l+\frac12}[\partial_{x^l}^2 w],
\end{align}
for
\begin{align*}
   h^{l}_{\alpha_l+\frac12} =  {x}^l_{\alpha_l+1}  - {x}^l_{\alpha_l};
\end{align*}
the modified numerical fluxes are defined as
\begin{align}\label{betaMD}
\widetilde{\partial_{x^l} p_{ih}^n}& =\widehat{\partial_{x^l} p_{ih}^n}+ \frac{\widetilde{\beta_{i}^l}}{2}[c_{ih}^n],
\end{align}
with the functions
\begin{equation}\label{defbetaM}
\widetilde{\beta_{i}^l}= \left\{\begin{array}{cl}
        \frac{|\widehat{\partial_{x^l} p_{ih}^n}|}{\{c_{ih}^n\}}, &\text{ if } \{c_{ih}^n\} > 0, \\
0, &\text{ if } \{c_{ih}^n\} = 0.
\end{array}\right.
\end{equation}
At the boundary $x^l = x_{\frac{1}{2}}^l$ or $x^l = x_{N_l+\frac{1}{2}}^l$, our guiding principle for defining numerical fluxes and averages is as follows: whenever exact boundary data is prescribed, it is used directly; otherwise, the interior numerical approximation is employed.
To enforce the zero-flux boundary condition in \eqref{PNPc}, we set 
\begin{equation*}
    \widehat{\partial_{x^l} p_{ih}^n} =0, \ \{p_{ih}^n\} = p_{ih}^n, \ \{c_{ih}^n\} = c_{ih}^n.
\end{equation*}
The modified numerical flux takes the given data, namely,
\begin{equation*}
    \widetilde{\partial_{x^l} p_{ih}^n} = \widehat{\partial_{x^l} p_{ih}^n} = 0, \quad \widetilde{\beta_{i}^l} =0.
\end{equation*}
For the potential $\psi$ subject to mixed boundary condition as in \eqref{PNPd}, we specify the numerical fluxes and the averages at the interface $x^l = x_{\frac{1}{2}}^l$ or $x^l = x_{N_l+\frac{1}{2}}^l$ on a case-by-case basis, depending on whether Dirichlet or Neumann conditions are imposed.
If a Dirichlet boundary condition is prescribed at the boundary, then:
\begin{align*}
& \widehat{\partial_{x^l} \psi_{h}^n} = \beta_0 \frac{\psi_{h}^{n,+} - \psi_D}{h^l_{\frac{1}{2}}} + \partial_{x^l} \psi_{h}^{n,+}, \ \{\psi_{h}^n\} = \psi_D, \quad \text{if } (x^1, \ldots, x^l_{\frac{1}{2}}, \ldots, x^d) \in \partial \Omega_D, \\
& \widehat{\partial_{x^l} \psi_{h}^n} = \beta_0 \frac{\psi_D-\psi_{h}^{n,-}}{h^l_{N_l+\frac{1}{2}}} + \partial_{x^l} \psi_{h}^{n,-}, \ \{\psi_{h}^n\} = \psi_D, \quad \text{if } (x^1, \ldots, x^l_{N_l+\frac{1}{2}}, \ldots, x^d) \in \partial \Omega_D,
\end{align*}
where 
\begin{equation*}
h^l_{1/2} = x_{1}^l - x_{1/2}^l, \quad h^l_{N_l+1/2} = x_{N_l+1/2}^l - x_{N_l}^l.
\end{equation*}
 
If a Neumann boundary condition is prescribed, then:
\begin{align*}
& \widehat{\partial_{x^l} \psi_{h}^n} =\sigma, \ \{\psi_{h}^n\} = \psi_{h}^{n,+}, \quad \text{if } (x^1, \ldots, x^l_{\frac{1}{2}}, \ldots, x^d) \in \partial \Omega_N, \\
& \widehat{\partial_{x^l} \psi_{h}^n} = \sigma, \ \{\psi_{h}^n\} = \psi_{h}^{n,-}, \quad \text{if } (x^1, \ldots, x^l_{N_l+\frac{1}{2}}, \ldots, x^d) \in \partial \Omega_N.
\end{align*}
As in the 1D case, we initialize $c_{ih}^{{0}}$ by projecting $c_{i}^{\rm in}$ onto $V_h$ so that
\begin{equation}\label{projMD}
\int_{K_\alpha} c_{ih}^{0}(x) v(x)dx=\int_{K_\alpha} c_{i}^{\rm in}(x)v(x)dx, \quad \forall v\in V_h, \quad i=1,\ldots, m.
\end{equation}

With these initial and boundary conditions, the scheme is now fully defined. 

\subsection{Positivity propagation}
We define the cell averages of $c_{ih}^n$ on $K_\alpha$ as 
$$
\bar{c}_{i\alpha}^{n} := \frac{1}{|K_\alpha|}\int_{K_\alpha}c_{ih}^{n}dx, 
$$
where $|K_\alpha| = \Pi_{l=1}^d \Delta x_{\alpha_l}^l$ with $\Delta x_{\alpha_l}^l = x^l_{\alpha_l+1/2}- x^l_{\alpha_l-1/2}$. 
On the interval $I^l_{\alpha_l}$, $l=1,\ldots, d$, we introduce the $M$-point Gauss-Lobatto quadrature rule using pairs $(\omega_{m_l}, x_{m_l}^{l,*})$, $1 \leq m_l \leq M$, with $M \geq \frac{k+3}{2}$. This quadrature is exact for polynomials of degree up to $k$, and the nodes satisfy
\begin{align*}
x_1^{l,*} = x_{\alpha_l-\frac{1}{2}}^l, \quad x_M^{l,*} = x_{\alpha_l+\frac{1}{2}}^l.
\end{align*}

Similar to the 1D scheme \eqref{fullyPDG}, the following results hold for the multidimensional scheme \eqref{fullyMD1}-\eqref{fullyMD3}.
\begin{theorem}\label{THMfullyPDG+}
For the fully discrete DG schemes \eqref{fullyMD1}-\eqref{fullyMD3}, the cell average $\bar c_{i\alpha }^{n+1} > 0$ provided $c_{ih}^n(x)>0$ and the CFL condition for $\forall 1\leq l \leq d$ and $\forall \ell \not =l$,

\begin{equation}\label{CFLMD}
      \Delta t \leq \frac{\omega_1}{d}
          \min  \left\{ \Delta x_{\alpha_l}^l |\widehat{\partial_{x^l} p_{ih}^n}|^{-1}\Big|_{(x_{m_1}^{1,*},\ldots, x^l_{\alpha_l \pm 1/2}, \ldots, x_{m_d}^{d,*})} \right\}, \quad 1\leq \alpha_l \leq N_l, 1\leq m_\ell \leq M,
\end{equation} 
is satisfied. Here, $\omega_1$ represents the first weight of the Gauss-Lobatto quadrature rules with $M \geq \frac{k+3}{2}$ points.
\end{theorem}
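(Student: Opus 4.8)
The plan is to mimic the one-dimensional argument of \Cref{THMfullyPDG} dimension by dimension, exploiting the tensor-product structure of $Q^k(K_\alpha)$ and the rectangular mesh. First I would take $v_i = \frac{\Delta t}{|K_\alpha|}$ in \eqref{fullyMD1}; since $\nabla v_i = 0$ the volume term drops, and the cell average update becomes
\begin{align*}
\bar c_{i\alpha}^{n+1} = \bar c_{i\alpha}^n + \frac{\Delta t}{|K_\alpha|}\sum_{l=1}^d \int_{K_\alpha\backslash I^l_{\alpha_l}} \{c_{ih}^n\}\,\widetilde{\partial_{x^l} p_{ih}^n}\Big|_{x^{l,+}_{\alpha_l-1/2}}^{x^{l,-}_{\alpha_l+1/2}}\, d(x\backslash x^l).
\end{align*}
The next step is to discretize each of the $d$ transverse face integrals $\int_{K_\alpha\backslash I^l_{\alpha_l}}(\cdot)\,d(x\backslash x^l)$ using the $(d-1)$-fold tensor product of the $M$-point Gauss--Lobatto rule in the directions $\ell\neq l$; since the integrand on a face is a $Q^k$ polynomial in the remaining variables, this quadrature is exact. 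This turns each face contribution into a finite sum over Gauss--Lobatto nodes $(x^{1,*}_{m_1},\ldots,x^{l}_{\alpha_l\pm1/2},\ldots,x^{d,*}_{m_d})$ with positive weights $\prod_{\ell\neq l}\omega_{m_\ell}$.

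Second, I would rewrite the one-dimensional cell average $\bar c_{i\alpha}^n$ itself via the \emph{full} $d$-fold tensor Gauss--Lobatto decomposition, $\bar c_{i\alpha}^n = \sum_{m_1,\ldots,m_d}\big(\prod_{l}\omega_{m_l}\big)c_{ih}^n(x^{1,*}_{m_1},\ldots,x^{d,*}_{m_d})$, and then split the total budget: write $\omega_1 = \sum_{l=1}^d \frac{\omega_1}{d}$ at each boundary node so that exactly a $\frac1d$ share of the corner/edge weights is assigned to absorb the outflow contribution from direction $l$. The key algebraic identity to reproduce is the 1D one: using $\{c_{ih}^n\}\widetilde{\partial_{x^l} p_{ih}^n} = \{c_{ih}^n\}\widehat{\partial_{x^l}p_{ih}^n} + \frac{\widetilde{\beta_i^l}}{2}\{c_{ih}^n\}[c_{ih}^n]$ and the elementary identity $\{c\}[c] = \frac12((c^+)^2 - (c^-)^2)$, one splits the flux times trace into a combination with coefficients $\widehat{\partial_{x^l}p_{ih}^n}\pm\widetilde{\beta_i^l}\{c_{ih}^n\}$ multiplying squared traces $c_{ih}^n(x^{l,\pm})$; the choice \eqref{defbetaM} of $\widetilde{\beta_i^l}$ makes the ``good-sign'' combination $\big(\widehat{\partial_{x^l}p_{ih}^n} + \widetilde{\beta_i^l}\{c_{ih}^n\}\big)|_{x^l_{\alpha_l+1/2}}\geq 0$ and $\big(-\widehat{\partial_{x^l}p_{ih}^n} + \widetilde{\beta_i^l}\{c_{ih}^n\}\big)|_{x^l_{\alpha_l-1/2}}\geq 0$, exactly as in the 1D proof, so those terms are manifestly nonnegative. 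The remaining ``bad-sign'' terms are of the form $-\frac{\lambda}{2}\Delta x^l_{\alpha_l}|\widehat{\partial_{x^l}p_{ih}^n}|\cdot c_{ih}^n(\text{boundary node})$ and get absorbed by the $\frac{\omega_1}{d}$-share of the Gauss--Lobatto weight at the same node; this absorption is what forces the factor $\frac1d$ and the $\pm1/2$ range in the CFL condition \eqref{CFLMD}, and it closes the argument since all surviving coefficients are nonnegative and $c_{ih}^n > 0$ by hypothesis (so the interior-node terms, with strictly positive weights, keep the sum strictly positive).

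The main obstacle, and the place requiring genuine care rather than routine copying, is the bookkeeping of the corner and edge Gauss--Lobatto nodes: a node of $K_\alpha$ lying on an edge or corner is simultaneously a boundary node $x^l_{\alpha_l\pm1/2}$ in several directions $l$, and for \emph{each} such direction a separate outflow term must be charged against \emph{its} weight. One must verify that the single weight $\prod_l\omega_{m_l}$ at that node, once partitioned into $d$ equal pieces via $\omega_1=\sum_l\frac{\omega_1}{d}$, is large enough to cover all $d$ outflow contributions simultaneously — this is precisely why $\frac{\omega_1}{d}$ (not $\omega_1$) appears in \eqref{CFLMD}, and one should check the endpoint cells $\alpha_l\in\{1,N_l\}$ separately, where the zero-flux or boundary prescription makes the relevant $|\widehat{\partial_{x^l}p_{ih}^n}|^{-1}=\infty$ and the corresponding constraint is vacuous, exactly as in the 1D case. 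A secondary point to state carefully is that the transverse quadrature nodes $x^{\ell,*}_{m_\ell}$ appearing in \eqref{CFLMD} range over \emph{all} $M$ Gauss--Lobatto points, since the face trace $\widehat{\partial_{x^l}p_{ih}^n}$ varies along the face, so the CFL bound must hold at every such node; once this is observed the inequality $\bar c_{i\alpha}^{n+1}>0$ follows termwise.
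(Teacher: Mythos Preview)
Your overall strategy matches the paper's: take $v_i=\Delta t/|K_\alpha|$, split the cell average into $d$ pieces so each piece absorbs one direction's flux, apply Gauss--Lobatto quadrature, and use the sign properties of $\widehat{\partial_{x^l}p_{ih}^n}\pm\widetilde{\beta_i^l}\{c_{ih}^n\}$ coming from \eqref{defbetaM}. Two small points are worth correcting or noting.

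First, the algebraic identity you invoke is off. You cite $\{c\}[c]=\tfrac12((c^+)^2-(c^-)^2)$ and speak of ``squared traces'', but the decomposition actually used (and that you in fact write down) is linear in the traces:
\[
\{c\}\Big(\widehat{\partial p}+\tfrac{\tilde\beta}{2}[c]\Big)
=\tfrac12\big(\widehat{\partial p}+\tilde\beta\{c\}\big)c^{+}+\tfrac12\big(\widehat{\partial p}-\tilde\beta\{c\}\big)c^{-},
\]
which follows simply from $\{c\}=\tfrac12(c^++c^-)$ and $\{c\}[c]=\{c\}c^+-\{c\}c^-$. No squares appear; the coefficients $\widehat{\partial p}\pm\tilde\beta\{c\}$ multiply $c^\pm$ directly.

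Second, your organization makes the bookkeeping harder than necessary. The paper does \emph{not} take the full $d$-fold Gauss--Lobatto expansion of $\bar c_{i\alpha}^n$ and then split weights at boundary nodes. Instead it writes $\bar c_{i\alpha}^n=\sum_{l=1}^d\frac{1}{d}\bar c_{i\alpha}^n$ \emph{before} any quadrature, pairs the $l$-th copy with the direction-$l$ flux, and only then applies Gauss--Lobatto---first in $x^l$, then in the transverse variables. With this ordering the ``corner/edge bookkeeping'' you flag as the main obstacle simply does not arise: within the $l$-th copy only the $x^l$-endpoints are singled out, and the transverse quadrature is a plain positive-weight sum. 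The $\tfrac{1}{d}$ factor in \eqref{CFLMD} then comes directly from the prefactor on each copy, not from any node-by-node weight splitting.
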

\begin{proof}
Taking $v_i = \frac{\Delta t}{|K_\alpha|}$ in \eqref{fullyMD1}, the time evolution of $\bar{c}_{i\alpha}^{n+1}$ on $K_\alpha$ is given by
\begin{align*}
\bar{c}_{i\alpha }^{n+1} = & \bar{c}_{i\alpha }^{n} + \frac{\Delta t}{|K_\alpha|}\sum_{l=1}^d \int_{K_\alpha \backslash I_{\alpha_l}^l}  \{c_{ih}^n\} \widetilde{\partial_{x^l} p_{ih}^n} \Big|_{x^l_{\alpha_l+1/2}} - \{c_{ih}^n\} \widetilde{\partial_{x^l} p_{ih}^n} \Big|_{x^l_{\alpha_l-1/2}} \,d(x\backslash x^l) \\
= & \sum_{l=1}^d \left( \frac{1}{d} \bar{c}_{ih}^{n} + \frac{\Delta t}{|K_\alpha|} \int_{K_\alpha \backslash I_{\alpha_l}^l}  \{c_{ih}^n\} \widetilde{\partial_{x^l} p_{ih}^n} \Big|_{x^l_{\alpha_l+1/2}} - \{c_{ih}^n\} \widetilde{\partial_{x^l} p_{ih}^n}  \Big|_{x^l_{\alpha_l-1/2}} \,d(x\backslash x^l) \right) \\
= & \frac{1}{d|K_\alpha|}\sum_{l=1}^d \Delta x^l_{\alpha_l} \int_{K_\alpha \backslash I_{\alpha_l}^l} \left( \frac{1}{\Delta x^l_{\alpha_l}}\int_{I_{\alpha_l}^l} {c}_{ih}^{n} dx^l + d \lambda_{\alpha_l}^l\Delta x^l_{\alpha_l}   \{c_{ih}^n\} \widetilde{\partial_{x^l} p_{ih}^n} \Big|_{x^l_{\alpha_l+1/2}} \right. \\
& \hspace{1cm} \left.- d \lambda_{\alpha_l}^l\Delta x^l_{\alpha_l}  \{c_{ih}^n\} \widetilde{\partial_{x^l} p_{ih}^n} \Big|_{x^l_{\alpha_l-1/2}} \, \right) d(x\backslash x^l),
\end{align*}
where
\begin{equation*}
\lambda^l_{\alpha_l}:= \frac{\Delta t}{(\Delta x^l_{\alpha_l})^2}.
\end{equation*}
We first apply the Gauss-Lobatto quadrature rule to approximate $\bar{c}_{i\alpha }^{n}$ on $I^{l}_{\alpha_l}$ and obtain
\begin{align*}
\bar{c}_{i\alpha }^{n+1} = & \frac{1}{d|K_\alpha|}\sum_{l=1}^d \Delta x^l_{\alpha_l} \int_{K_\alpha \backslash I_{\alpha_l}^l} \left[ \sum_{m_l=1}^M \omega_{m_l} c_{ih}^n(x^1,\ldots, x^{l,*}_{m_l}, \ldots, x^d) \right. \\
& \hspace{0cm} + \left. \frac{d \lambda_{\alpha_l}^l\Delta x^l_{\alpha_l}}{2}    \left(\widehat{\partial_{x^l} p_{ih}^n} + \widetilde{\beta_i^l} \{ c_{ih}^n \} \right) \Big|_{x_{\alpha_l+\frac{1}{2}}^{l}} c_{ih}^n(x^1, \ldots,x_{\alpha_l+\frac{1}{2}}^{l,+},\ldots, x^d) \right. \\
& \hspace{0cm} + \frac{d \lambda_{\alpha_l}^l\Delta x^l_{\alpha_l}}{2}    \left(\widehat{\partial_{x^l} p_{ih}^n} - \widetilde{\beta_i^l} \{ c_{ih}^n \} \right) \Big|_{x_{\alpha_l+\frac{1}{2}}^{l}} c_{ih}^n(x^1, \ldots,x_{\alpha_l+\frac{1}{2}}^{l,-}, \ldots, x^d) \\
& \hspace{0cm} - \frac{d \lambda_{\alpha_l}^l\Delta x^l_{\alpha_l}}{2}    \left(\widehat{\partial_{x^l} p_{ih}^n} + \widetilde{\beta_i^l} \{ c_{ih}^n \} \right) \Big|_{x_{\alpha_l-\frac{1}{2}}^{l}} c_{ih}^n(x^1, \ldots,x_{\alpha_l-\frac{1}{2}}^{l,+}, \ldots, x^d) \\
& \hspace{0cm} \left. + \frac{d \lambda_{\alpha_l}^l\Delta x^l_{\alpha_l}}{2}    \left(- \widehat{\partial_{x^l} p_{ih}^n} + \widetilde{\beta_i^l} \{ c_{ih}^n \} \right) \Big|_{x_{\alpha_l-\frac{1}{2}}^{l}} c_{ih}^n(x^1, \ldots,x_{\alpha_l-\frac{1}{2}}^{l,-}, \ldots, x^d)  \right] d(x\backslash x^l) \\
= &\frac{1}{d|K_\alpha|}\sum_{l=1}^d \Delta x^l_{\alpha_l} \int_{K_\alpha \backslash I_{\alpha_l}^l} \left[ \sum_{m_l=2}^{M-1} \omega_{m_l} c_{ih}^n(x^1,\ldots, x^{l,*}_{m_l}, \ldots, x^d) \right. \\
& \hspace{0cm} + \left. \frac{d \lambda_{\alpha_l}^l\Delta x^l_{\alpha_l}}{2}    \left(\widehat{\partial_{x^l} p_{ih}^n} + \widetilde{\beta_i^l} \{ c_{ih}^n \} \right) \Big|_{x_{\alpha_l+\frac{1}{2}}^{l}} c_{ih}^n(x^1, \ldots,x_{\alpha_l+\frac{1}{2}}^{l,+}, \ldots, x^d) \right. \\
& \hspace{0cm} + \frac{d \lambda_{\alpha_l}^l\Delta x^l_{\alpha_l}}{2}    \left(- \widehat{\partial_{x^l} p_{ih}^n} + \widetilde{\beta_i^l} \{ c_{ih}^n \} \right) \Big|_{x_{\alpha_l-\frac{1}{2}}^{l}} c_{ih}^n(x^1, \ldots,x_{\alpha_l-\frac{1}{2}}^{l,-}, \ldots, x^d) \\
& \hspace{0cm} + \left( \omega_1- \frac{d \lambda_{\alpha_l}^l\Delta x^l_{\alpha_l}}{2}    \left(\widehat{\partial_{x^l} p_{ih}^n} + \widetilde{\beta_i^l} \{ c_{ih}^n \} \right) \Big|_{x_{\alpha_l-\frac{1}{2}}^{l}} \right) c_{ih}^n(x^1, \ldots,x_{\alpha_l-\frac{1}{2}}^{l,+}, \ldots, x^d) \\
& \hspace{0cm} \left. + \left( \omega_M - \frac{d \lambda_{\alpha_l}^l\Delta x^l_{\alpha_l}}{2}    \left(-\widehat{\partial_{x^l} p_{ih}^n} + \widetilde{\beta_i^l} \{ c_{ih}^n \} \right) \Big|_{x_{\alpha_l+\frac{1}{2}}^{l}} \right)c_{ih}^n(x^1, \ldots,x_{\alpha_l+\frac{1}{2}}^{l,-}, \ldots, x^d)  \right] d(x\backslash x^l).
\end{align*}
Based on the choice of $\widetilde{\beta_i^l}$ in \eqref{defbetaM}, it follows
\begin{align*}
\left(\widehat{\partial_{x^l} p_{ih}^n} + \widetilde{\beta_i^l} \{ c_{ih}^n \} \right) \Big|_{(x^1, \ldots,x_{\alpha_l+\frac{1}{2}}^{l}, \ldots, x^d)} \geq 0, \quad \left(-\widehat{\partial_{x^l} p_{ih}^n} + \widetilde{\beta_i^l} \{ c_{ih}^n \} \right) \Big|_{(x^1, \ldots,x_{\alpha_l-\frac{1}{2}}^{l}, \ldots, x^d)} \geq 0.
\end{align*}
Then applying the Gauss-Lobatto quadrature rule in all directions except $x^l$ gives
\begin{align*}
\bar{c}_{i\alpha }^{n+1} = &\frac{1}{d|K_\alpha|}\sum_{l=1}^d \Delta x^l_{\alpha_l}  \sum_{m_1=1}^M \omega_{m_1} \cdots \sum_{m_{l-1}=1}^M \omega_{m_{l-1}} \sum_{m_{l+1}=1}^M \omega_{m_{l+1}}\cdots \sum_{m_d=1}^M \omega_{m_d} \\
& \hspace{-1cm} \cdot \left[ \sum_{m_l=2}^{M-1} \omega_{m_l} c_{ih}^n(x^{1,*}_{m_1},\ldots, x^{l,*}_{m_l}, \ldots, x^{d,*}_{m_d}) \right. \\
& \hspace{-1cm} + \left. \frac{d \lambda_{\alpha_l}^l\Delta x^l_{\alpha_l}}{2}    \left(\widehat{\partial_{x^l} p_{ih}^n} + \widetilde{\beta_i^l} \{ c_{ih}^n \} \right) \Big|_{(x^{1,*}_{m_1}, \ldots,x_{\alpha_l+\frac{1}{2}}^{l}, \ldots, x^{d,*}_{m_d})} c_{ih}^n(x^{1,*}_{m_1}, \ldots,x_{\alpha_l+\frac{1}{2}}^{l,+}, \ldots, x^{d,*}_{m_d}) \right. \\
& \hspace{-1cm} + \frac{d \lambda_{\alpha_l}^l\Delta x^l_{\alpha_l}}{2}    \left(- \widehat{\partial_{x^l} p_{ih}^n} + \widetilde{\beta_i^l} \{ c_{ih}^n \} \right) \Big|_{(x^{1,*}_{m_1}, \ldots,x_{\alpha_l-\frac{1}{2}}^{l}, \ldots, x^{d,*}_{m_d})} c_{ih}^n(x^{1,*}_{m_1}, \ldots,x_{\alpha_l-\frac{1}{2}}^{l,-}, \ldots, x^{d,*}_{m_d}) \\
& \hspace{-1cm} + \left( \omega_1- \frac{d \lambda_{\alpha_l}^l\Delta x^l_{\alpha_l}}{2}    \left(\widehat{\partial_{x^l} p_{ih}^n} + \widetilde{\beta_i^l} \{ c_{ih}^n \} \right) \Big|_{(x^{1,*}_{m_1}, \ldots,x_{\alpha_l-\frac{1}{2}}^{l}, \ldots, x^{d,*}_{m_d})} \right) c_{ih}^n(x^{1,*}_{m_1}, \ldots,x_{\alpha_l-\frac{1}{2}}^{l,+}, \ldots, x^{d,*}_{m_d}) \\
& \hspace{-1cm} \left. + \left( \omega_M - \frac{d \lambda_{\alpha_l}^l\Delta x^l_{\alpha_l}}{2}    \left(-\widehat{\partial_{x^l} p_{ih}^n} + \widetilde{\beta_i^l} \{ c_{ih}^n \} \right) \Big|_{(x^{1,*}_{m_1}, \ldots,x_{\alpha_l+\frac{1}{2}}^{l}, \ldots, x^{d,*}_{m_d})} \right)c_{ih}^n(x^{1,*}_{m_1}, \ldots,x_{\alpha_l+\frac{1}{2}}^{l,-}, \ldots, x^{d,*}_{m_d}))  \right] \\
& \hspace{-1cm} >0,
\end{align*}
provided
\begin{align*}
\lambda_{\alpha_l}^l \leq \frac{1}{d} \frac{\omega_1}{\Delta x^l_{\alpha_l}} \min\left\{  |\widehat{\partial_{x^l} p_{ih}^n}|^{-1} \Big|_{(x_{m_1}^{1,*},\ldots, x^l_{\alpha_l \pm 1/2}, \ldots, x_{m_d}^{d,*})} \right\}, \quad  1\leq m_\ell \leq M, \quad \forall \ell \not = l.
 \end{align*}
Again, we used $\omega_1 = \omega_N$ due to the symmetry of Gauss-Lobatto quadrature rules.

\end{proof}

\begin{remark}\label{old2Dscheme}
If $\widetilde{\beta_i^l}\equiv 0$ in \eqref{betaMD}, then the DG scheme \eqref{fullyMD1}-\eqref{fullyMD3} reduces to the high-dimensional form of the fully discrete DG scheme described in \eqref{dgEuler}. 
\end{remark}

\section{Numerical Examples} \label{sec:num}
In this section, we present a set of selected examples to validate our positivity-preserving DDG scheme. For accuracy, we assess the order of accuracy by performing numerical convergence tests using discrete $l_1$ errors.
\subsection{1D accuracy tests}
We begin to study the impact of the positivity preserving flux $\widetilde{\partial_x p_{ih}}$ on the scheme accuracy.  We revisit the example from \S 5.1 of \cite{LW17}, where the DDG scheme \eqref{dgEuler} was applied. Consider the domain $\Omega=[0,1]$ and the PNP problem with source terms defined by the following system of equations:  
\begin{align*}
&  \partial_t c_1  = \partial_x (\partial_x c_1+  q_1 c_1 \partial_x \psi) +f_1, \\
&  \partial_t c_2  = \partial_x (\partial_x c_2+   q_2 c_2 \partial_x \psi) +f_2, \\
&- \partial_x^2 \psi =  q_1c_1+ q_2c_2,\\
&\partial_x \psi(t,0) =0,  \quad \partial_x \psi(t, 1)=-e^{-t}/60, \\
&{\partial_x c_i} +q_ic_i {\partial_x \psi}=0,
 \quad x=0, 1,
\end{align*}
where the source terms are given by 
\begin{align*}
 f_1 &=\frac{(50x^9-198x^8+292x^7-189x^6+45x^5)}{30e^{2t}}+\frac{ (-x^4+2x^3-13x^2+12x-2)}{e^{t}},   \\
 f_2&=\frac{(x - 1)(110x^9 - 430x^8 + 623x^7 - 393x^6+90x^5)}{60e^{2t}} + \frac{(x-1)(x^4 - 2x^3 + 21x^2 - 16x + 2)}{e^{t}}.
\end{align*}
This system, with $q_1=1$ and $q_2=-1$, admits exact solutions:
\begin{align*}
  c_1 & =x^2(1-x)^2e^{-t}, \\
 c_2  &= x^2(1-x)^3e^{-t}, \\
\psi & =  -(10x^7-28x^6+21x^5)e^{-t}/420.
\end{align*}
Note that we impose  $\psi(t,0)=0$ to select a specific solution, as  $\psi$ is unique up to an additive constant. 

We solve this problem using the positivity-preserving DG scheme \eqref{fullyPDG}.
Table \ref{tab:ex1} presents both $l_1$ errors and orders of convergence when using $P^{k}$ elements at $T=0.05$, incorporating the modified local flux $\widetilde{\partial_x p_{ih}}$. Our results indicate that the method preserves an order of convergence of $k+1$, consistent with the findings in \cite{LW17}. This suggests that the modified flux $\widetilde{\partial_x p_{ih}}$ maintains optimal $k+1$ order of accuracy. We adhere to the CFL conditions outlined in Theorem \ref{thmEuler} and Remark \ref{rem:CFL}. Figure \ref{fig:ex1} provides a visual comparison between numerical and exact solutions at $t=0.05$.

\begin{table}[!htb]
\caption{Errors for Example 1 at $T=0.05$}
\begin{tabular}{ |c|l|c|c| c|c| c|c| }
\hline
$(k,\beta_0, \beta_1)$& h & $c_1$ error & order & $c_2$ error & order & $\psi$ error & order \\ \hline
\multirow{4}{*}{$(1,2,-)$}
&  0.2&  0.00074325&  -&  0.001946&  -&  0.00044667&  -\\ 
&  0.1&  0.00015581&  2.0164&  0.00022054&  2.6233&  4.2569e-05&  3.3250\\ 
&  0.05&  4.3138e-05&  1.8976&  3.8745e-05&  2.3642&  4.1411e-06&  3.2918\\ 
&  0.025&  1.1223e-05&  1.9425&  8.3198e-06&  2.2194&  4.4382e-07&  3.2220\\ \hline
 \multirow{4}{*}{$(2,4,{1}/{20})$}
&  0.2&  0.0035405&  -&  0.0013336&  -&  0.00026474&  -\\ 
&  0.1&  0.00075498&  2.5660&  0.00022787&  2.6867&  4.1164e-05&  2.7921\\ 
&  0.05&  0.00011782&  2.7343&  3.4364e-05&  2.7555&  5.7354e-06&  2.8456\\ 
&  0.025&  1.7049e-05&  2.7889&  4.9971e-06&  2.7817&  7.9669e-07&  2.8478\\  \hline
\end{tabular}
\label{tab:ex1}
\end{table}

 \begin{figure}[!htb]
 \caption{Numerical solution versus exact solution at $T=0.05$}
 \centering
 \begin{tabular}{cc}
 \includegraphics[width=\textwidth]{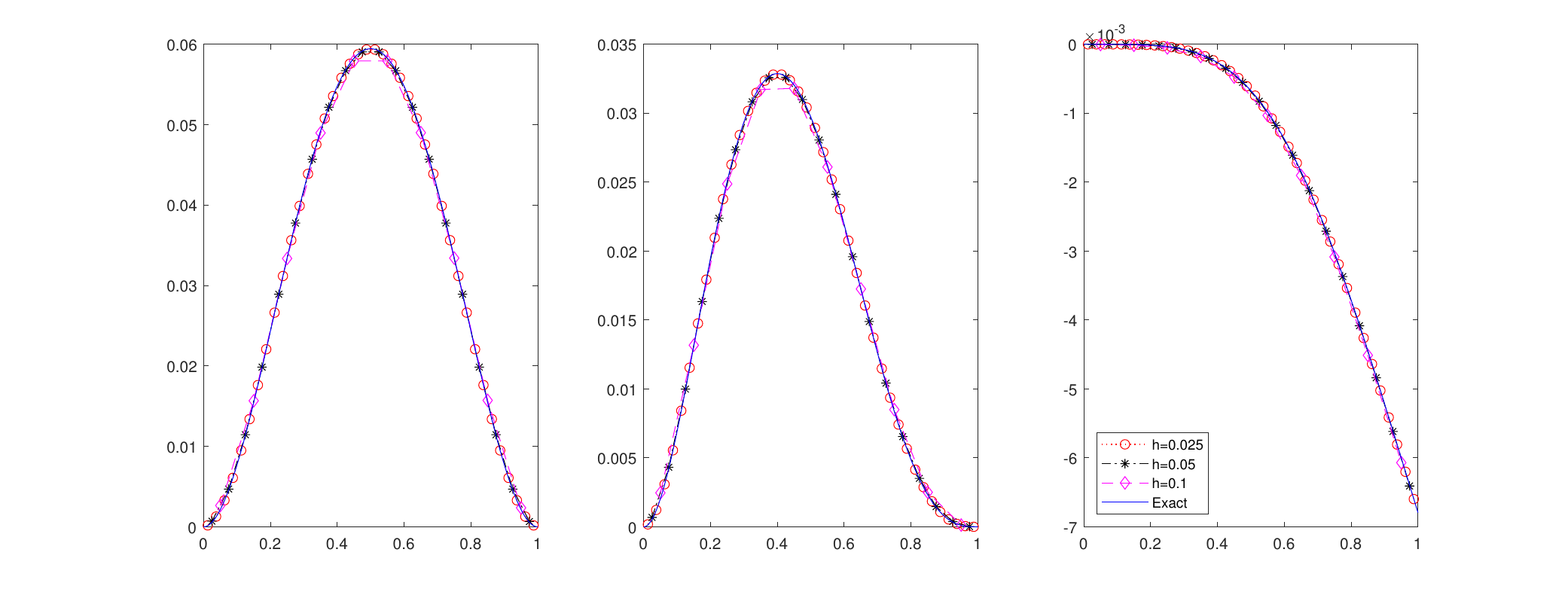}\\
 \end{tabular}
  \label{fig:ex1} \end{figure}

\subsection{1D tests on solution properties}
\label{1Dtestacc}
We test solution properties  
such as solution positivity, mass conservation and free energy dissipation. We apply the proposed scheme to the one-dimensional PNP system  \eqref{PNP} with $m=2$ in $\Omega=[0,1]$, defined by the following equations:  
\begin{align*}
&\partial_t c_1 = \partial_x (\partial_x c_1+ q_1 c_1 \partial_x \psi), \\
&\partial_t c_2  = \partial_x (\partial_x c_2 + q_2 c_2 \partial_x \psi), \\
& - \partial_x^2 \psi =  q_1c_1 + q_2c_2,\\
& c_1(0,x) = c_1^{\rm in}(x), \quad 
 c_2(0,x) = c_2^{\rm in}(x), \\
&\psi(t,0) =0,  \quad \partial_x \psi(t,0) =0,  \quad \partial_x \psi(t, 1)=0, \\
& {\partial_x c_i} +q_ic_i {\partial_x \psi}=0,
 \quad x=0, \; 1.
\end{align*}
In this system, the ion charges are $q_1 = 1$ and $q_2 = -1$, and the initial concentrations  are given by 
\begin{equation*}
\begin{aligned}
c_1^{\rm in}(x)= & \left\{ {\begin{array}{*{20}{l}}
{ 0.1, \quad \text{if } x \in (0.4, 0.6),} \\
{ 0.288, \quad \text{if } x \in [0.2, 0.4] \cup [0.6, 0.8], }\\
{ 5x^2(1-x)^2, \quad \text{otherwise}, }
\end{array}} \right.\\
c_2^{\rm in}(x)= & \frac{\pi}{10} \left| \sin( 2\pi x^2 ) \right|.
\end{aligned}
\end{equation*}
For the DG schemes applied to this example,  we use $P^1$ polynomials with numerical flux parameters $\beta_0 =4$. The initial time step is set to $\Delta t_0 = 3.5\times 10^{-5}$, and the mesh size is $h=0.025$.

\noindent \textbf{Test case 1}. We solve this problem using the DG scheme \eqref{fullyPDG} with a fixed time step $\Delta t = \Delta t_0$ and employ the modified flux $\widetilde{\partial_x p_{ih}}$ in \eqref{Pflux} at each step. 
\Cref{ex2cgall} shows the evolution of the smallest cell averages of $c_1$ and $c_2$, demonstrating that the DG scheme \eqref{fullyPDG} conserves the positivity of the solutions at
quadrature points. 
\begin{figure}
\centering
\subfigure[Smallest cell average of $c_1$]{\includegraphics[width=0.49\textwidth]{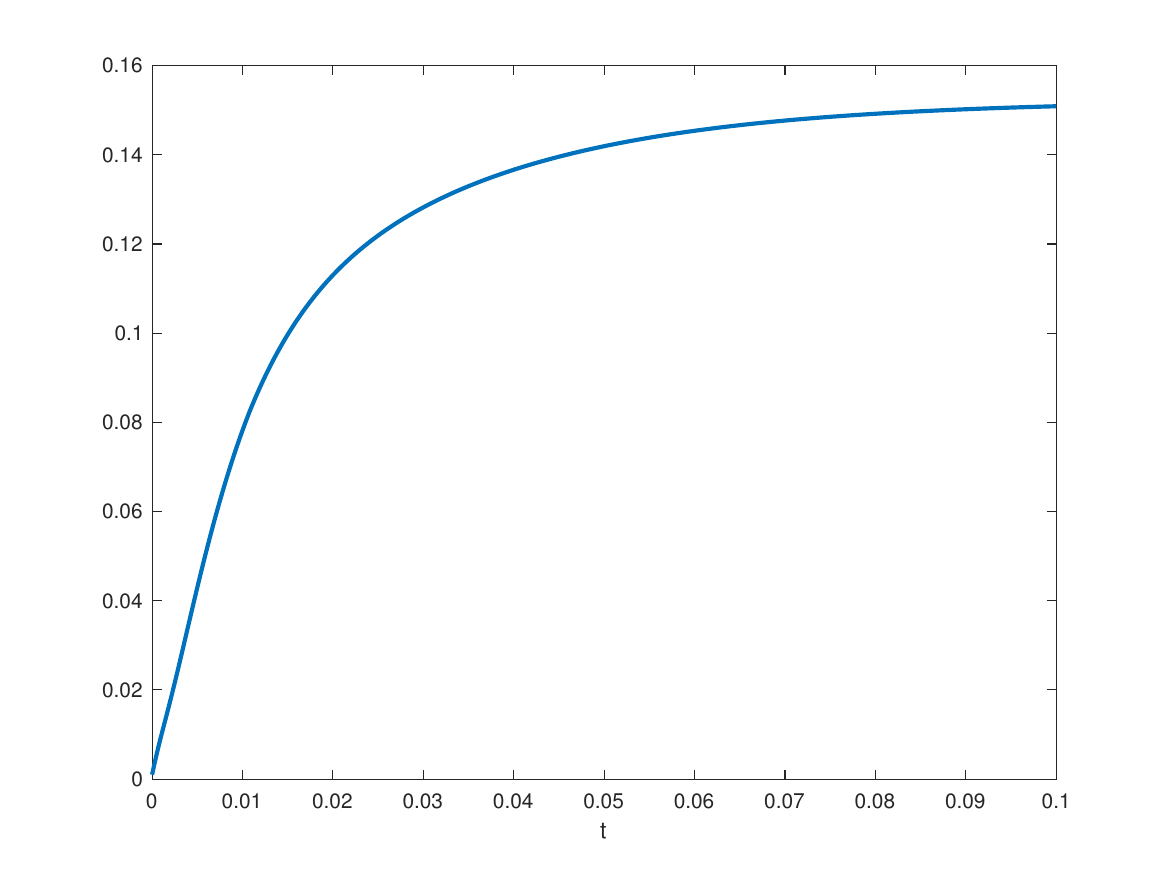}}
\subfigure[Smallest cell average of $c_2$]{\includegraphics[width=0.49\textwidth]{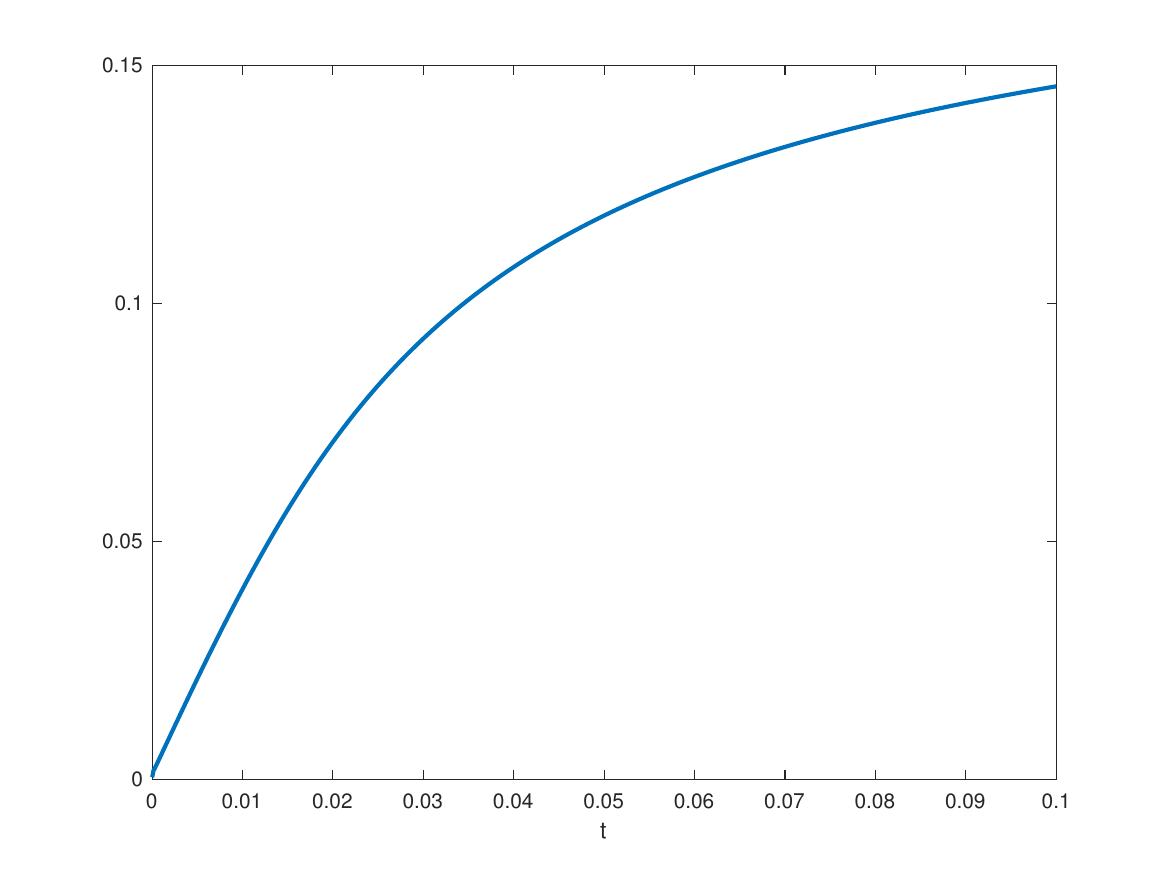}}
\caption{ Smallest cell averages of $c_1, c_2$.
  } \label{ex2cgall}
 \end{figure}

\Cref{ex2emall} displays the evolution of the total masses of $c_1$ and $c_2$, which are conserved at approximately $0.15439$ for $c_1$ and $0.17031$ for $c_2$. Additionally, Figure \ref{ex2emall} confirms the dissipation of free energy. \Cref{fig:ex2solall} shows the evolution of the solutions, indicating that  $c_1$ and $c_2$ approach equilibrium at $T=0.1$.

 \begin{figure}[!htb]
\caption{Conservation of mass and decay of free energy}
\centering
\begin{tabular}{cc}
\includegraphics[width=\textwidth]{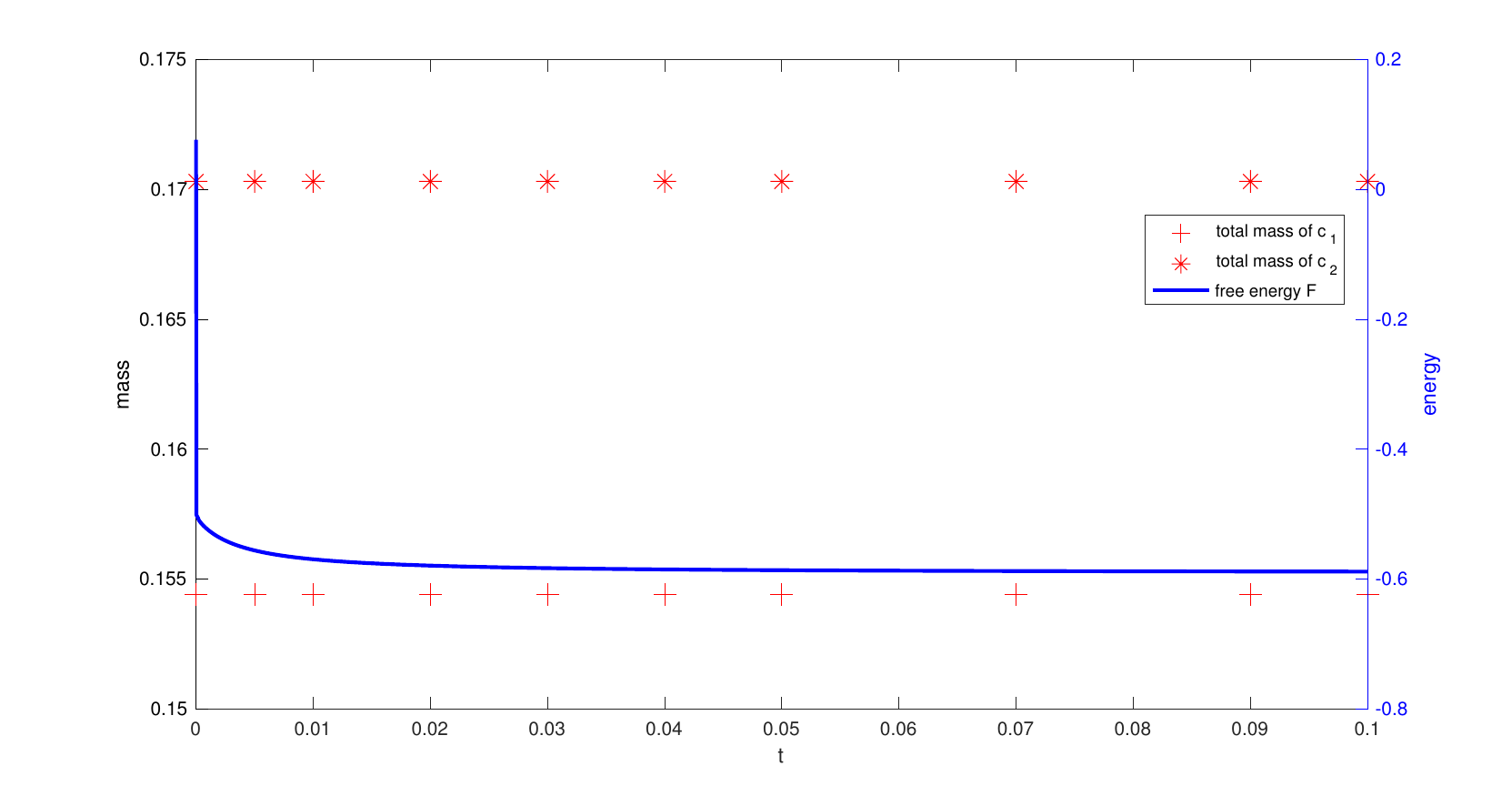} 
\end{tabular}
\label{ex2emall}
\end{figure}

 \begin{figure}[!htb]
 \caption{Numerical solution evolution}
 \centering
 \begin{tabular}{cc}
 \includegraphics[width=\textwidth]{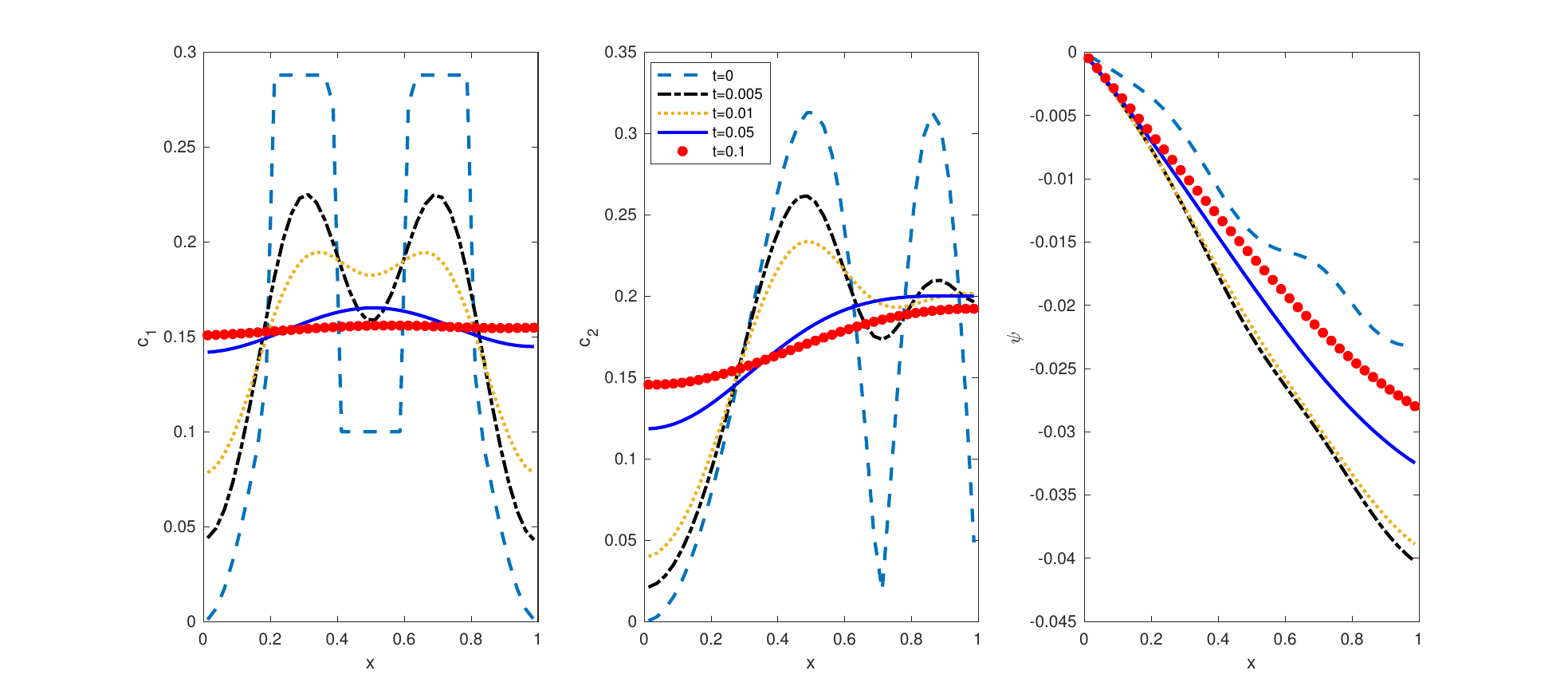}\\
 \end{tabular}
  \label{fig:ex2solall} \end{figure}

\noindent \textbf{Test case 2}. Next, we solve this problem using the DG scheme \eqref{dgEuler} from our previous work \cite{LW17} with the same fixed time step $\Delta t = \Delta t_0$ as used in Test case 1. However, the numerical results indicate that at $t=0.000245$, the smallest cell average of the concentration $c_2$ on the quadrature points is $-0.029306$, indicating a failure of the DG scheme \eqref{dgEuler}. 

To address this issue, we modify the algorithm for the DG scheme \eqref{dgEuler}. Specifically, if the smallest cell averages of $c_1$ and $c_2$ drop below $0$ (or small threshold $\varepsilon$), we
switch to using the DG scheme \eqref{fullyPDG} at that particular time step. Additionally, we adjust the time step according to the CFL condition specified in \Cref{CFL1D} or \Cref{rem:CFL} with $\gamma=1$.

Numerically, \Cref{ex2cgpart} shows that the modified algorithm effectively preserves the positivity of $c_1$ and $c_2$. \Cref{ex2empart} illustrates the decay of the free energy and confirms the conservation of total masses $0.15439$ for $c_1$ and $0.17031$ for $c_2$. Furthermore,   \Cref{fig:ex2solpart} shows a similar evolution of the solutions as observed in Test case 1.

\begin{figure}
\centering
\subfigure[Smallest cell average of $c_1$]{\includegraphics[width=0.49\textwidth]{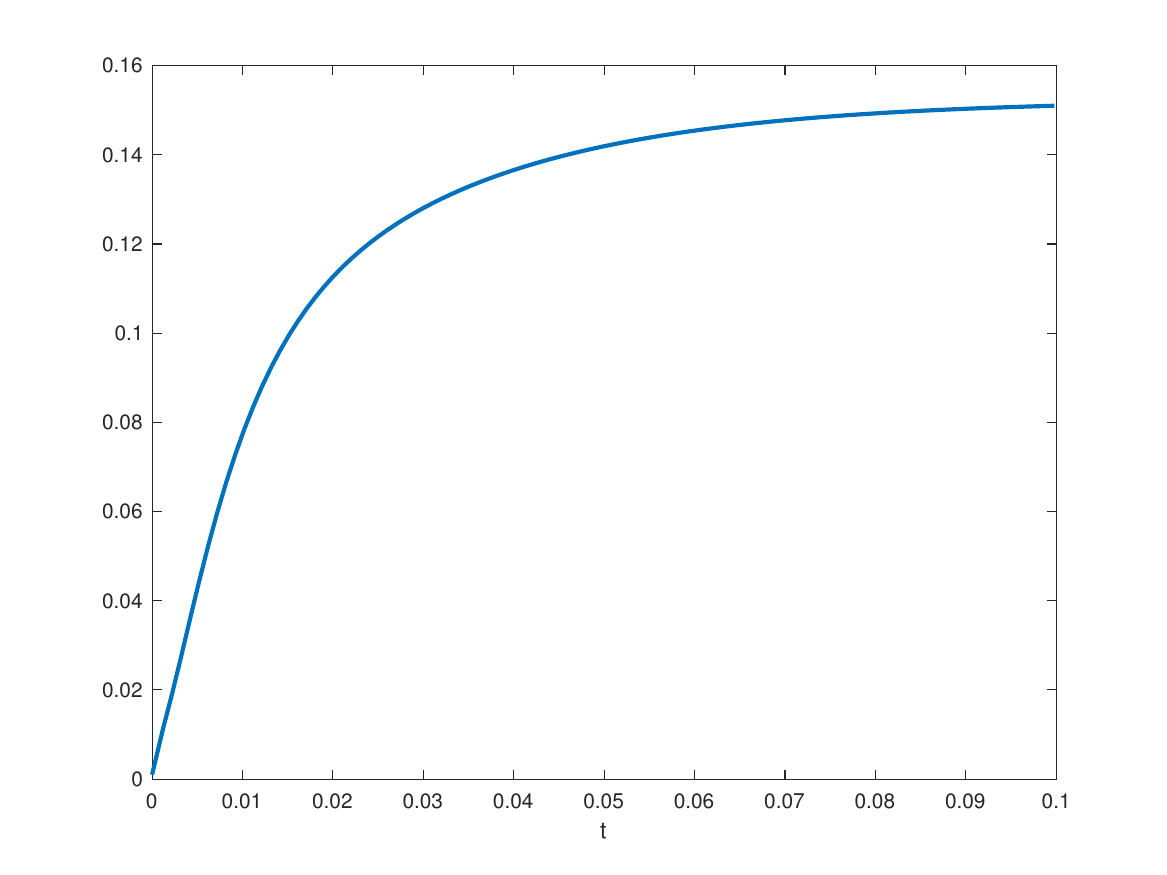}}
\subfigure[Smallest cell average of $c_2$]{\includegraphics[width=0.49\textwidth]{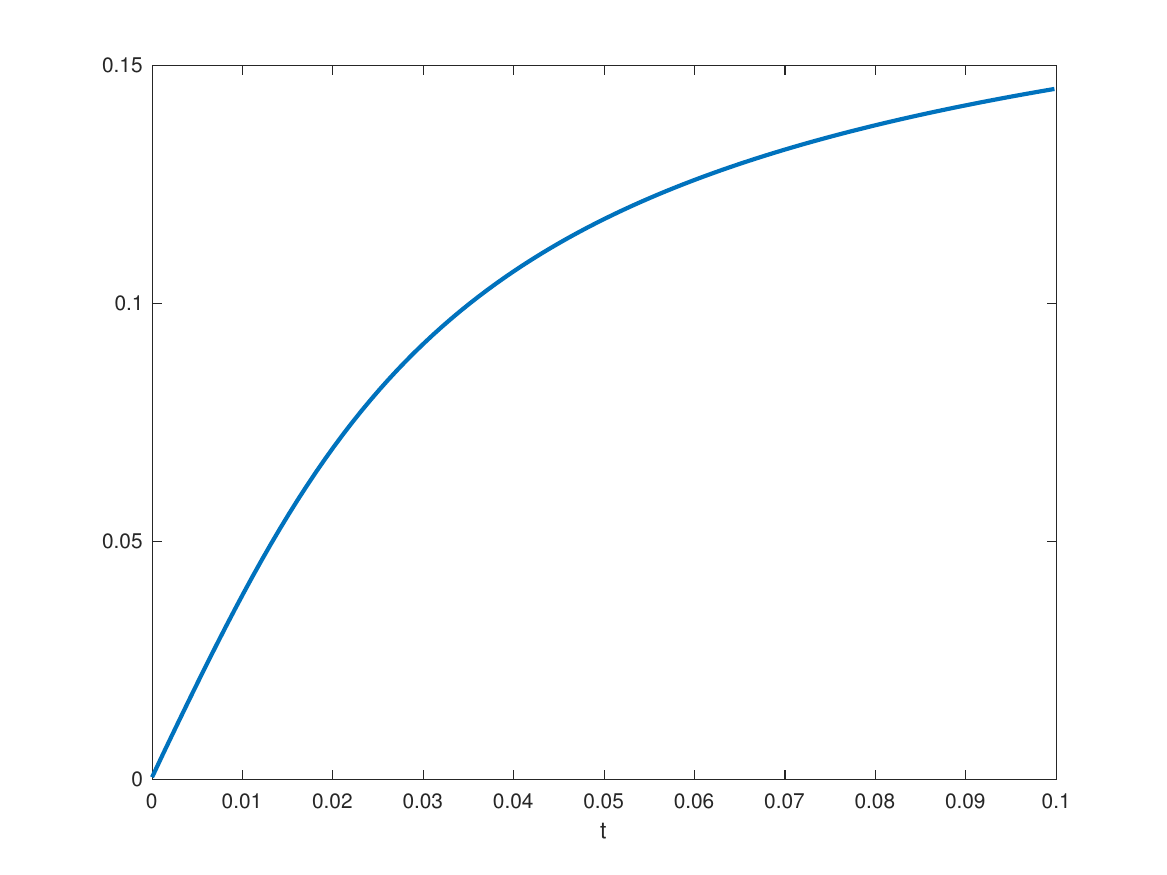}}
\caption{ Smallest cell averages of $c_1, c_2$.
  } \label{ex2cgpart}
 \end{figure}

 \begin{figure}[!htb]
\caption{Conservation of mass and decay of free energy}
\centering
\begin{tabular}{cc}
\includegraphics[width=\textwidth]{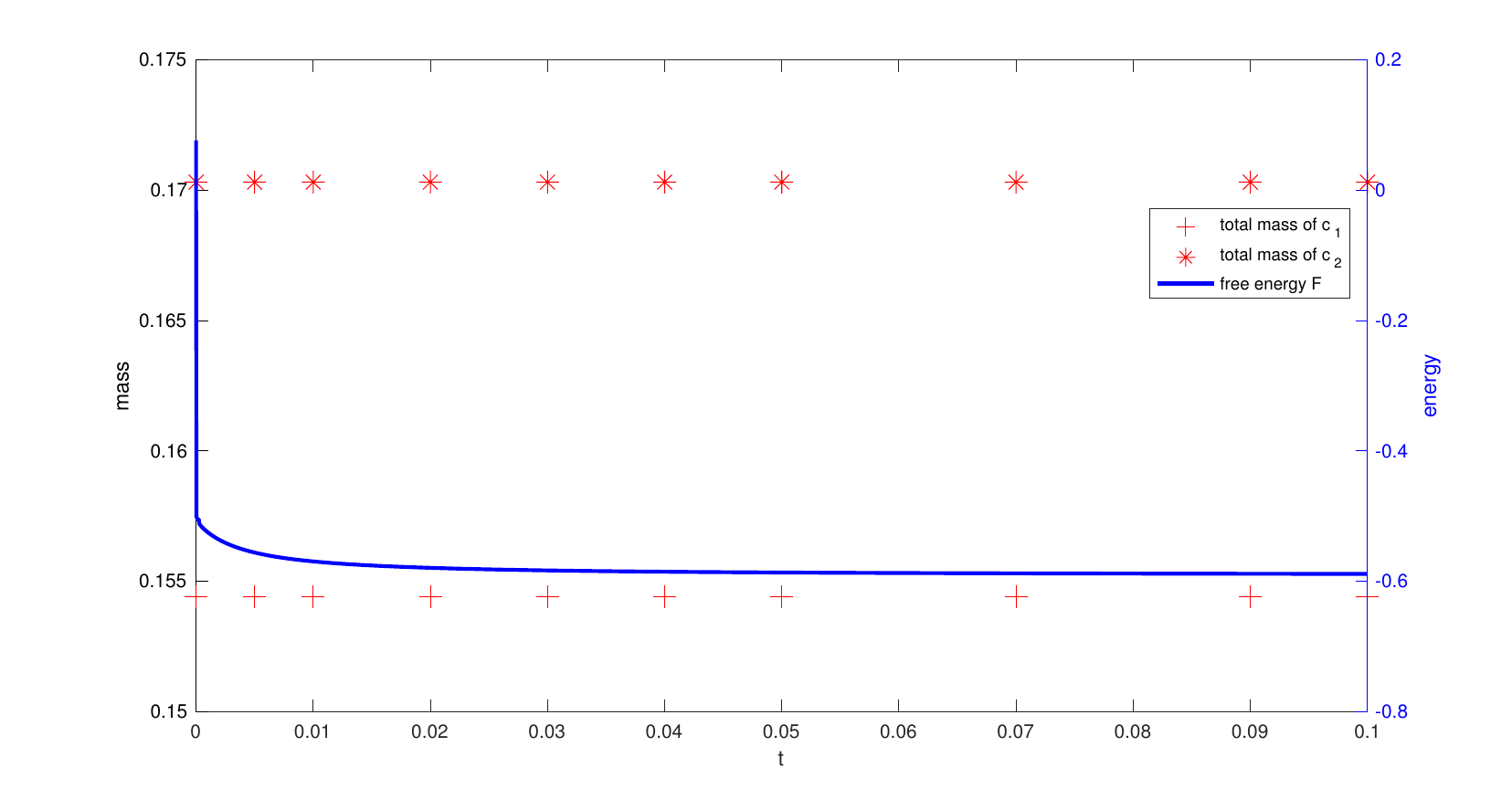} 
\end{tabular}
\label{ex2empart}
\end{figure}

 \begin{figure}[!htb]
 \caption{Numerical solution evolution}
 \centering
 \begin{tabular}{cc}
 \includegraphics[width=\textwidth]{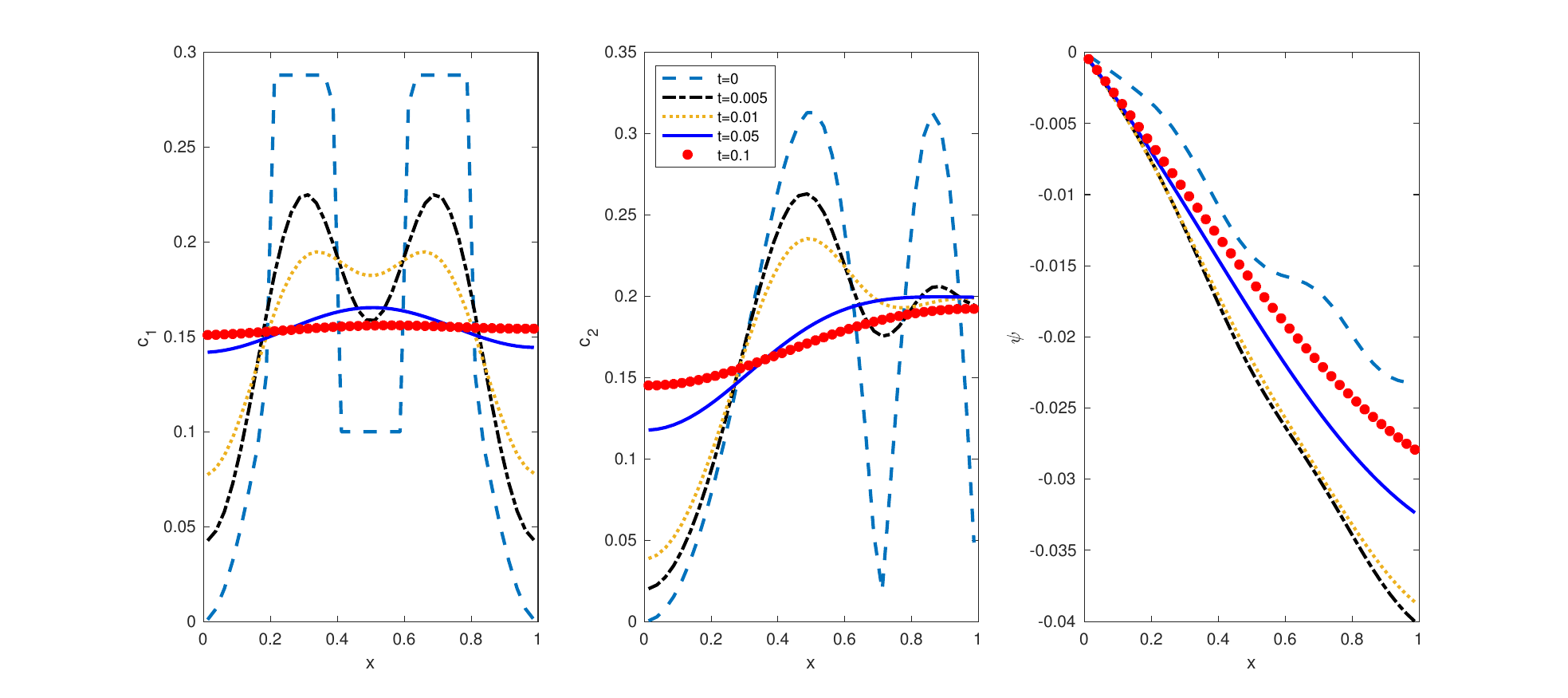}\\
 \end{tabular}
  \label{fig:ex2solpart} \end{figure}

From \textbf{Test case 1} and \textbf{Test case 2}, we see that our positivity preserving flux $\widetilde{\partial_x p_{ih}}$ will prevent negative cell average when applied at each time step following \textbf{Algorithm \ref{alg:HDDG}}, or only when needed  (i.e., when \eqref{dgEuler} from  \cite{LW17} fails), while maintaining order of accuracy.

\subsection{2D accuracy tests}\label{2Dtestacc}
We evaluate the spatial accuracy of our scheme in a 2D setting using the PNP problem \eqref{PNP} defined on $\Omega=[0,\pi]^2$,  with prescribed source terms:  
\begin{align*}
&\partial_t c_1= \nabla\cdot(\nabla c_1+c_1\nabla\psi)+f_1,\\
&\partial_t c_2= \nabla\cdot(\nabla c_2-c_2\nabla\psi)+f_2,\\
&-\Delta \psi =  c_1-c_2 + f_3.
\end{align*}
Here,the functions $f_i(t,x,y)$ are determined by the exact solutions $c_1(t,x,y)$, $c_2(t,x,y)$, and $\psi(t,x,y)$, which are specified for each test case.
The initial conditions in (\ref{PNP}c) are derived by evaluating the exact solution at $t=0$, while the boundary conditions in (\ref{PNP}c) are set to zero flux. 
We denote the boundary sets as $\partial \Omega_D=\{(x,y)\in \bar{\Omega}: x=0, x=\pi \}$ and $\partial \Omega_N=\partial \Omega \backslash \partial \Omega_D$. The boundary data in (\ref{PNP}d) is derived from evaluating the exact solution $\psi(t,x,y)$ on $\partial \Omega_D$,  and its normal derivative $\frac{\partial \psi}{\partial  \textbf{n}}$ on $\partial \Omega_N$. 
The problem is tackled using the fully discrete DG scheme \eqref{dgEuler} in a 2D setting (for more details, refer to \Cref{old2Dscheme}), along with the DG scheme \eqref{fullyMD1}-\eqref{fullyMD3}, that has been adjusted to incorporate the specific source terms. 

{\bf \noindent Test case 1.}
We consider the exact solutions: 
\begin{align*}
& c_1(t,x,y) = \alpha_{1} \left( e^{-\alpha t} \cos(x)\cos(y) + 1\right), \\
& c_2(t,x,y)= \alpha_{2} \left( e^{-\alpha t} \cos(x)\cos(y) + 1\right), \\
& \psi(t,x,y) = \alpha_{3} e^{-\alpha t} \cos(x)\cos(y),
\end{align*}
where the parameters  are set as $\alpha=\alpha_{1}=2\alpha_{2}=\alpha_{3}=10^{-2}$. It is observed that $c_1 \geq 0$ and $c_2\geq 0$ for $t\geq 0$, with  $\min_{(x,y)}{c_1} = 0$ and $\min_{(x,y)}{c_2} = 0$ achievable only at $t=0$.

The numerical flux parameters $\beta_0, \beta_1$ in the DDG schemes follow the guidelines from  \cite{YHL14, YHL18},  and the specific values  of $\beta_0, \beta_1$ can be found in \Cref{tab:ex2Ds1LT} and \Cref{tab:ex2Ds1LT2}. A time step of $\Delta t  = 10^{-(k+1)} h^2$ is chosen for both DG schemes. 

\Cref{tab:ex2Ds1LT} presents the $l_1$ errors and orders of convergence at $t=0.1$ for the DG scheme \eqref{dgEuler} in the 2D setting with $N\times N$ meshes.  Additionally, results for the DG scheme \eqref{fullyMD1}-\eqref{fullyMD3}, using the numerical fluxes \eqref{betaMD} at each time step, are reported in \Cref{tab:ex2Ds1LT2}.
These results demonstrate that both schemes achieve $(k+1)$-th order of accuracy in space, confirming that the modified  numerical fluxes do not compromise the accuracy of the numerical solutions. 

\begin{table}[!htb]
\caption{Errors and orders of convergence for test case 1 in \Cref{2Dtestacc} for the DG scheme \eqref{dgEuler} in the 2D setting with meshes $N\times N$ at $T=0.1$}
\begin{tabular}{ |c|l|c|c| c|c| c|c| }
\hline
$(k,\beta_0, \beta_1)$& $N$ & $c_1$ error & order & $c_2$ error & order & $\psi$ error & order \\ \hline
\multirow{4}{*}{$(1,3,-)$}
 & $10$ & 4.35298e-04 &  --  & 2.17789e-04 &  --  & 6.23731e-04 &  --   \\
 & $20$ & 1.09438e-04 & 1.99 & 5.47851e-05 & 1.99 & 1.67117e-04 & 1.90  \\
 & $30$ & 4.86339e-05 & 2.00 & 2.43496e-05 & 2.00 & 7.55204e-05 & 1.96  \\
 & $40$ & 2.73409e-05 & 2.00 & 1.36889e-05 & 2.00 & 4.27726e-05 & 1.98  \\
 \hline
 \multirow{4}{*}{$(2,9,\frac{1}{12})$}
 & $10$ & 2.62719e-05 &  --  & 1.31419e-05 &  --  & 2.05689e-05 &  --   \\
 & $20$ & 3.17503e-06 & 3.05 & 1.58789e-06 & 3.05 & 2.38879e-06 & 3.11  \\
 & $30$ & 9.09636e-07 & 3.08 & 4.54895e-07 & 3.08 & 6.92827e-07 & 3.05  \\
 & $40$ & 3.75304e-07 & 3.08 & 1.87676e-07 & 3.08 & 2.89435e-07 & 3.03  \\
 \hline
 \multirow{4}{*}{$(3,19,\frac{1}{12})$}

 & 10 & 5.34967e-06 &  --  & 2.67516e-06 &  --  & 7.94054e-07 &  --   \\
 & 20 & 3.60228e-07 & 3.89 & 1.80121e-07 & 3.89 & 4.82770e-08 & 4.04  \\
 & 30 & 6.93756e-08 & 4.06 & 3.46887e-08 & 4.06 & 9.46406e-09 & 4.02  \\
 & 40 & 2.07955e-08 & 4.19 & 1.03979e-08 & 4.19 & 2.98476e-09 & 4.01  \\ 
 \hline
\end{tabular}
\label{tab:ex2Ds1LT}
\end{table}

\begin{table}[!htb]
\caption{Errors and orders of convergence for test case 1 in \Cref{2Dtestacc} for the DG scheme \eqref{fullyMD1}-\eqref{fullyMD3} with meshes $N\times N$ at $T=0.1$}
\begin{tabular}{ |c|l|c|c| c|c| c|c| }
\hline
$(k,\beta_0, \beta_1)$& $N$ & $c_1$ error & order & $c_2$ error & order & $\psi$ error & order \\ \hline
\multirow{4}{*}{$(1,3,-)$}
 & $10$ & 4.36263e-04 &  --  & 2.18339e-04 &  --  & 6.23863e-04 &  --   \\
 & $20$ & 1.09719e-04 & 1.99 & 5.49267e-05 & 1.99 & 1.67132e-04 & 1.90  \\
 & $30$ & 4.87287e-05 & 2.00 & 2.43965e-05 & 2.00 & 7.55255e-05 & 1.96  \\
 & $40$ & 2.73888e-05 & 2.00 & 1.37128e-05 & 2.00 & 4.27750e-05 & 1.98  \\

\hline
 \multirow{4}{*}{$(2,9,{1}/{12})$}
 & $10$ & 2.61752e-05 &  --  & 1.30945e-05 &  --  & 2.05701e-05 &  --   \\
 & $20$ & 3.16680e-06 & 3.05 & 1.58381e-06 & 3.05 & 2.38883e-06 & 3.11  \\
 & $30$ & 9.07046e-07 & 3.08 & 4.53606e-07 & 3.08 & 6.92832e-07 & 3.05  \\
 & $40$ & 3.74168e-07 & 3.08 & 1.87111e-07 & 3.08 & 2.89436e-07 & 3.03  \\
 \hline
 \multirow{4}{*}{$(3,19,{1}/{12})$}

 & 10 & 5.29795e-06 &  --  & 2.64941e-06 &  --  & 7.94046e-07 &  --   \\
 & 20 & 3.57902e-07 & 3.89 & 1.78959e-07 & 3.89 & 4.82773e-08 & 4.04  \\
 & 30 & 6.90252e-08 & 4.06 & 3.45136e-08 & 4.06 & 9.46415e-09 & 4.02  \\
 & 40 & 2.07029e-08 & 4.19 & 1.03517e-08 & 4.19 & 2.98479e-09 & 4.01  \\
 \hline
\end{tabular}
\label{tab:ex2Ds1LT2}
\end{table}

{\bf \noindent Test case 2.}
We modify the exact solution from Test Case 1 as
follows: 
\begin{align*}
& c_1(t,x,y) = \alpha_{1} e^{-\alpha t} \left( \cos(x)\cos(y) + 1\right), \\
& c_2(t,x,y)= \alpha_{2}  e^{-\alpha t} \left( \cos(x)\cos(y) + 1\right), \\
& \psi(t,x,y) = \alpha_{3} e^{-\alpha t} \cos(x)\cos(y),
\end{align*}
where the parameters are set as $\alpha=\alpha_{1}=2\alpha_{2}=\alpha_{3}=10^{-2}$.
It can be verified that the densities  $c_1 \geq 0$ and $c_2\geq 0$ for $t\geq 0$, and $\min_{(x,y)}{c_1} = 0$ and $\min_{(x,y)}{c_2} = 0$  are achieved for any $t \geq 0$.

For this test case, we consider two algorithms based on $P^2$ polynomials. Following the guidelines in \cite{LWYY22} for the third-order positivity-preserving DDG method for PNP equations,  we select the numerical flux parameters $\beta_0=16$ and $\beta_1=\frac{1}{6}$.
The initial time step is set according to the DDG method in \cite{LWYY22} to ensure positive  average values of densities $c_1$ and $c_2$ in each computational cell,  specifically $\Delta t  = 5\times 10^{-3} h^2$ for both DG schemes in this paper. Note that this time step may not guarantee that the DDG scheme in \cite{LWYY22} achieves the optimal orders of convergence. 

First, we evaluate the DG scheme \eqref{dgEuler} in the 2D setting. The evolution of the minimum cell average values of $c_1$ and $c_2$ on quadrature points with various mesh sizes is presented in \Cref{tab:c12cellavg}. It is observed that the scheme \eqref{dgEuler} in the 2D setting can produce negative cell average values at certain times  $t_0 \in (0,T)$, thereby indicating a failure of the non-modified algorithm.

\begin{table}[!htb]
\caption{The minimum cell averages of $c_1$ and $c_2$ at $t=t_0$ with meshes $N\times N$. }
\begin{tabular}{ |c|c|c|c| }
\hline
 $N$ & $t_0$ & $c_1$ & $c_2$ \\ 
\hline
 10  &   0.0052381   & -0.00019494   &  -6.8271e-05 \\
\hline
 20  &   0.0010976   & -3.3455e-05    &  1.318e-05 \\
\hline
 30  &   0.0004918   & -1.2657e-05    &  -5.4982e-06 \\
\hline
 40  &   0.00027692   & -6.2495e-06    & 3.873e-06\\
\hline
\end{tabular}
\label{tab:c12cellavg}
\end{table}

Next, we investigate the DG scheme \eqref{fullyMD1}-\eqref{fullyMD3}, using the numerical fluxes \eqref{betaMD} selectively when \eqref{dgEuler} produces negative cell averages. To ensure positive cell averages, we adjust the time step to satisfy \eqref{CFL1D+} in the 2D setting, where the parameter $\gamma$ depends on the total number of the quadrature points in each direction. 

We calculate the $l_1$  errors and orders of convergence at $t=0.01$ using various numbers of Gauss-Lobatto quadrature points and different values of  $\gamma$. 
The results are summarized in \Cref{tab:ex2DmGL}. Notably, we observe third-order accuracy and find that as the total number of quadrature points increases, 
$\gamma$ can be set to a larger value. This adjustment improves the approximation of the parameter $\lambda$ in   \eqref{CFL1D} or \eqref{CFL1D+}, thereby 
enhancing numerical stability.

\begin{table}[!htb]
\caption{$l^1$ errors and orders at $t=0.01$ with meshes $N\times N$, $m$ Gauss-Lobatto quadrature points in each direction, and the parameter $\gamma$ in \eqref{CFL1D+}. }
\begin{tabular}{ |c|l|c|c| c|c| c|c| }
\hline
$(m,\gamma)$& $N$ & $c_1$ error & order & $c_2$ error & order & $\psi$ error & order \\ \hline
\multirow{4}{*}{$(3,{1}/{4})$}
 & 10 & 3.65605e-03 &  --  & 1.73140e-03 &  --  & 2.48497e-04 &  --   \\
 & 20 & 6.15627e-05 & 5.89 & 1.23012e-05 & 7.14 & 7.67527e-06 & 5.02  \\
 & 30 & 4.70311e-06 & 6.34 & 2.34902e-06 & 4.08 & 1.03025e-06 & 4.95  \\
 & 40 & 2.51946e-06 & -- & 1.29628e-06 & -- & 4.09384e-07 & 3.21  \\
\hline
\multirow{4}{*}{$(4,{1}/{2})$}
 & 10 & 7.73601e-05 &  --  & 3.49995e-05 &  --  & 2.69952e-05 &  --   \\
 & 20 & 4.71377e-06 & 4.04 & 2.26200e-06 & 3.95 & 3.15719e-06 & 3.10  \\
 & 30 & 1.06658e-06 & 3.67 & 5.29898e-07 & 3.58 & 9.23417e-07 & 3.03  \\
 & 40 & 4.29086e-07 & 3.17 & 2.13824e-07 & 3.15 & 3.87547e-07 & 3.02  \\
\hline
 \multirow{4}{*}{$(5,{1})$}
 & 10 & 1.26670e-04 &  --  & 4.12020e-05 &  --  & 2.48704e-05 &  --   \\
 & 20 & 4.35146e-06 & 4.86 & 2.09879e-06 & 4.30 & 2.68506e-06 & 3.21  \\
 & 30 & 1.08331e-06 & 3.43 & 5.37169e-07 & 3.36 & 7.71977e-07 & 3.07  \\
 & 40 & 4.37280e-07 & 3.15 & 2.18748e-07 & 3.12 & 3.21122e-07 & 3.05  \\
 \hline
\end{tabular}
\label{tab:ex2DmGL}
\end{table}

\subsection{2D tests on solution properties} 
 \label{sec:2d} In this 2D example, we test solution properties, including solution positivity, mass conservation, and free energy dissipation.  
Following a methodology similar to \cite[Example 4]{LWYY22}, we assess the effectiveness of the proposed scheme in solving  the two-dimensional PNP system \eqref{PNP} with $m=2$ in $\Omega=[0,1]^2$,
given by 
\begin{align*}
&\partial_t c_1= \nabla\cdot(\nabla c_1+c_1\nabla\psi),\\
&\partial_t c_2= \nabla\cdot(\nabla c_2-c_2\nabla\psi),\\
&-\Delta \psi =  c_1-c_2,\\
& c_1^{\rm in}(x,y)=\frac{1}{2}x^2(1-x)^2(1-\cos(\pi y)), \\
& c_2^{\rm in}(x,y)=\pi \sin(\pi x)y^2(1-y)^2,\\
& \frac{\partial c_i}{\partial  \textbf{n}}+q_ic_i \frac{\partial \psi}{\partial  \textbf{n}} =0 ,\quad (x,y) \in \partial \Omega,\\
&\psi =0 \mbox{~~on~} \partial\Omega_D,  \mbox{~~and~} \frac{\partial \psi}{\partial  \textbf{n}}  =0  \mbox{~~on~} \partial\Omega_N, \quad t>0,
\end{align*}
where $\partial \Omega_D=\{(x,y)\in \bar{\Omega}: x=0, x=1 \}$ and $\partial \Omega_N=\partial \Omega \backslash \partial \Omega_D$, with ion charges $q_1 =1$ and $q_2=-1$.

For this example, the DG schemes use $P^2$ polynomials with numerical flux parameters $\beta_0=16$ and $\beta_1=\frac{1}{6}$. Following the approach in  \cite[Example 4]{LWYY22}, we set the initial time step to  $\Delta t= 10^{-5}$ and use a mesh size of $20\times 20$. 

{\bf \noindent Test case 1.} We first solve this problem using the DG scheme \eqref{dgEuler} in the 2D setting. By $t=4\times 10^{-5}$, the minimum cell average of $c_1$ is $-1.8946\times 10^{-6}$, indicating a failure of the scheme.

{\bf \noindent Test case 2.} 
Subsequently, we implement the DG scheme \eqref{fullyMD1}-\eqref{fullyMD3}, employing the numerical fluxes \eqref{betaMD} selectively at time steps where \eqref{dgEuler} produces negative cell averages. 
To address solution positivity, Figure \ref{ex4cg}(a) and \ref{ex4cg}(b) show the evolution of the smallest cell averages of $c_1, c_2$ for $t\in [0,1]$. Notably, from Figure \ref{ex4cg}, it is clear that the smallest cell averages of both $c_1$ and $c_2$ remain positive for all time $t\in [0, 1]$.

\begin{figure}
\centering
\subfigure[Smallest cell average of $c_1$.]{\includegraphics[width=0.49\textwidth]{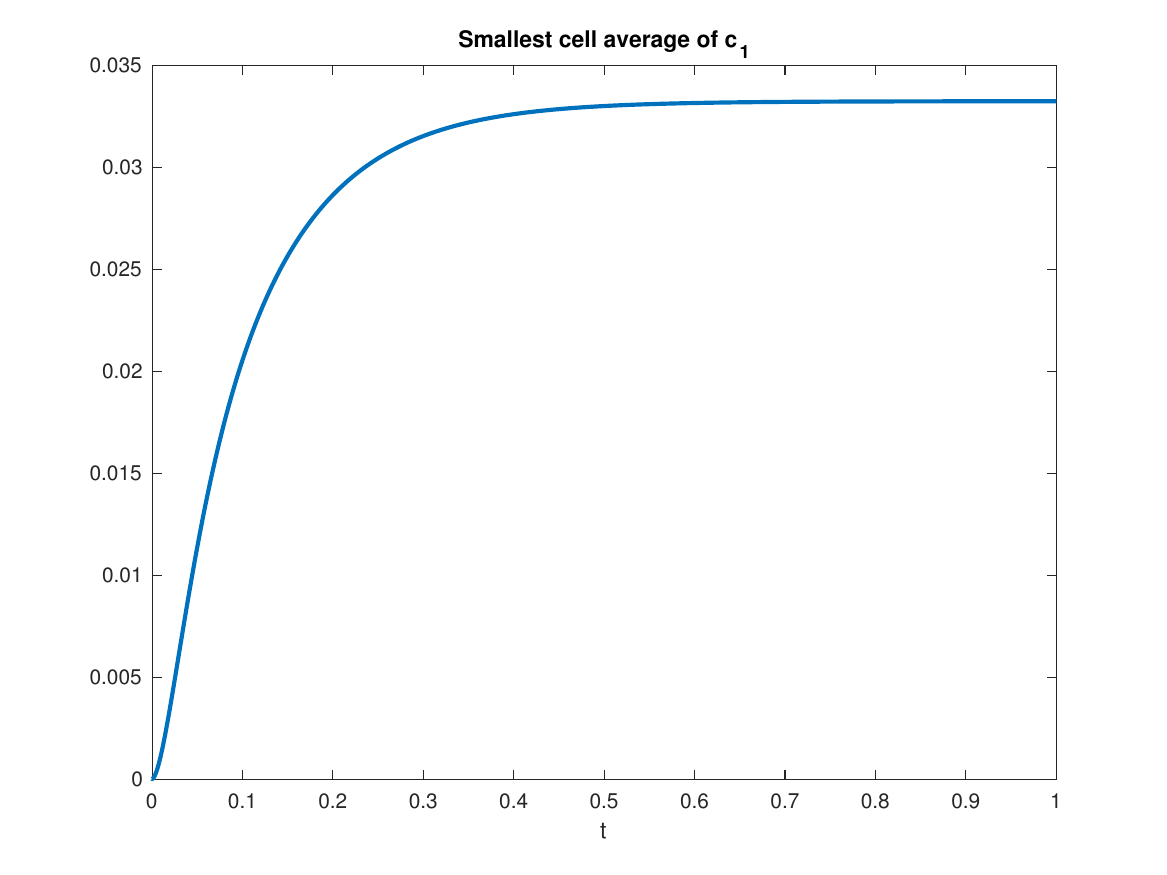}}
\subfigure[Smallest cell average of $c_2$.]{\includegraphics[width=0.49\textwidth]{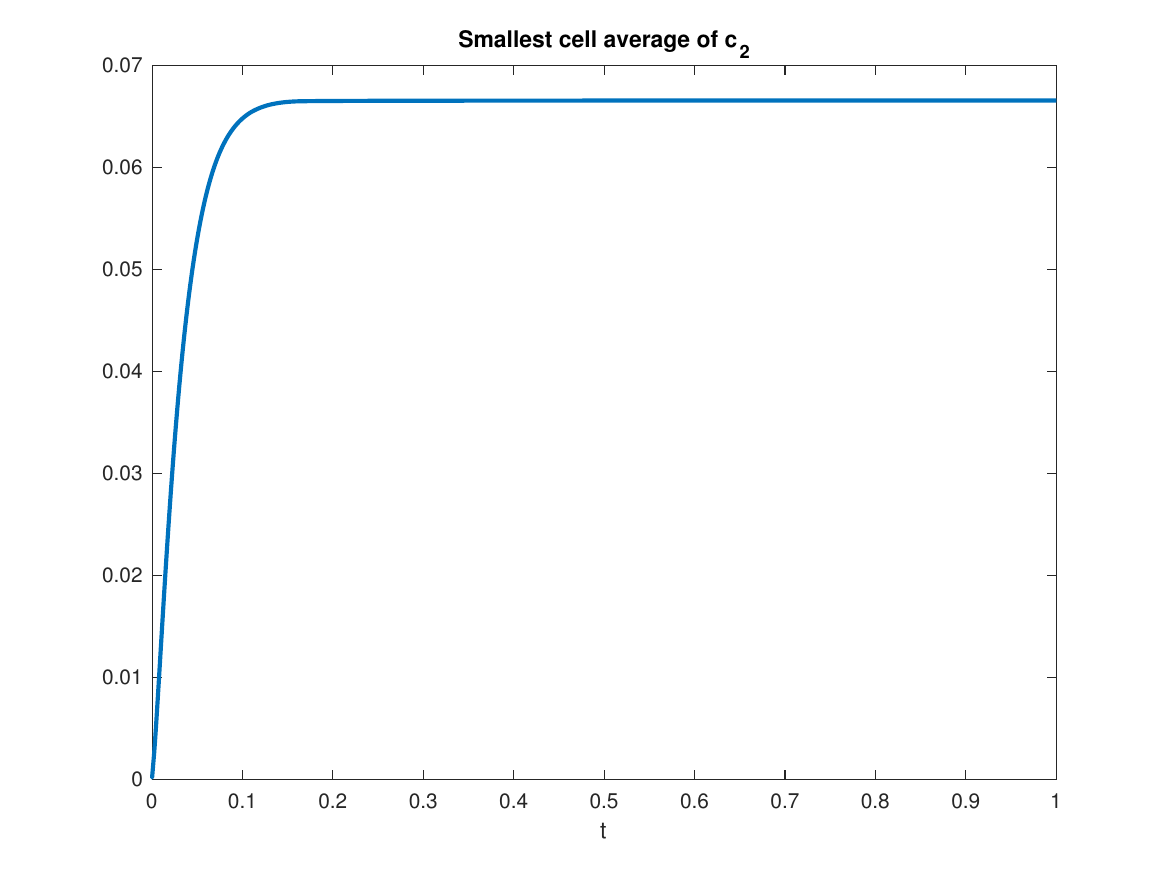}}
\caption{ Smallest cell averages of $c_1, c_2$.
  } \label{ex4cg}
 \end{figure}

{\bf \noindent Test case 3.} 
Regarding the conservation of mass and the free energy dissipation,  Figure \ref{ex4em} illustrates the total mass evolution for both $c_1$ and $c_2$. The results show that the DG scheme conserves the total mass, with $\frac{1}{30}$ for $c_1$ and $\frac{2}{30}$ for $c_2$. Furthermore, Figure \ref{ex4em} presents the evolution of the energy, confirming the dissipation of free energy over time.

 \begin{figure}[!htb]
\caption{Conservation of mass and decay of free energy}
\centering
\begin{tabular}{cc}
\includegraphics[width=\textwidth]{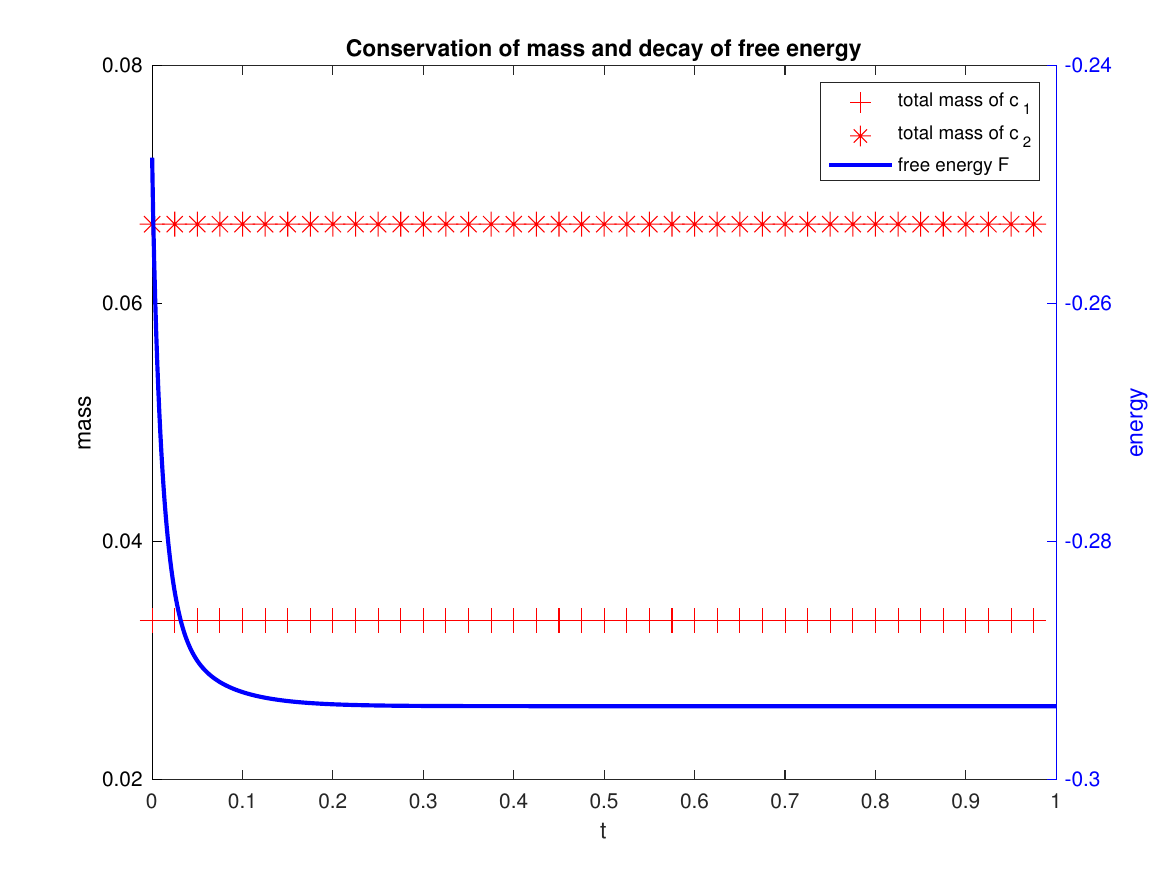} 
\end{tabular}
\label{ex4em}
\end{figure}

Finally, we analyze the evolution of $c_1, c_2$,  and $\psi$ for $t\in (0,1]$.  Figure \ref{ex5pattern} shows the contours of $c_1 -1/30$ (first column), $c_2-2/30$ (second column) and $\psi$ (third column) at $t=0, 0.2, 0.5$, and $t=1$.  Notably, the contours of $c_2$ and $\psi$ at $t=0.5$ and $t=1.0$ are nearly identical, suggesting that the solution approaches a steady state by $t=1$.

\begin{figure}
\centering
\subfigure{\includegraphics[width=0.325\textwidth]{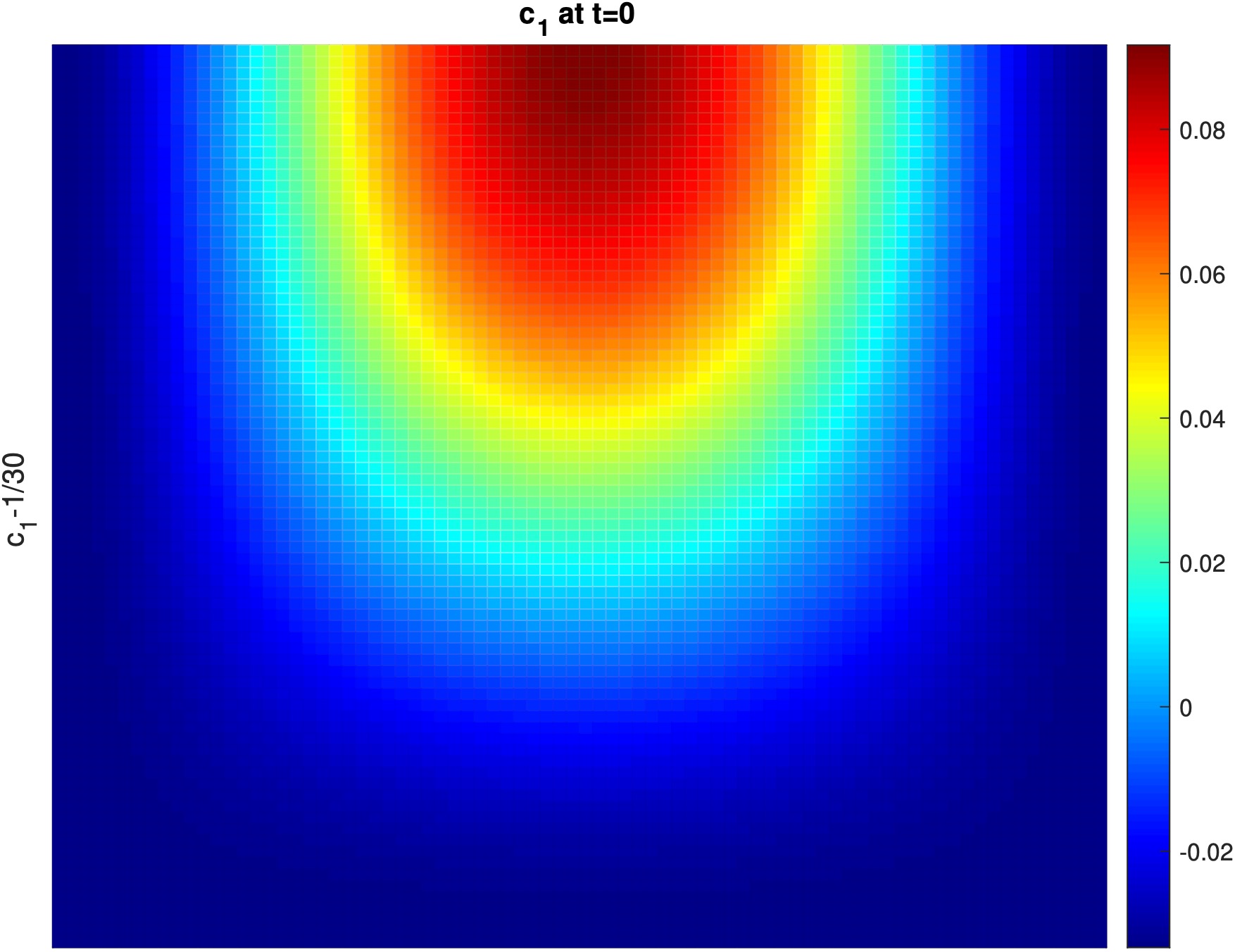}}
\subfigure{\includegraphics[width=0.325\textwidth]{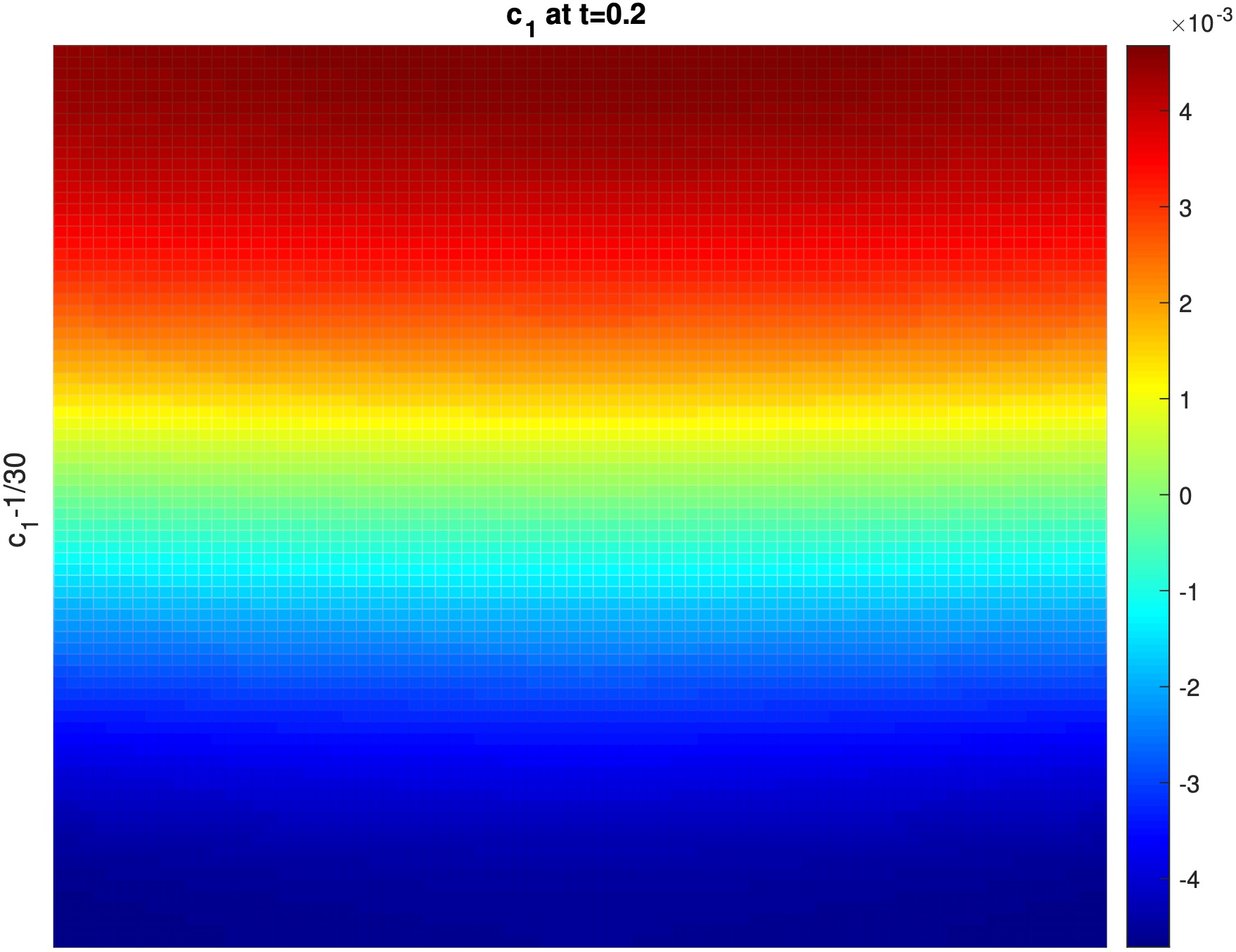}}
\subfigure{\includegraphics[width=0.325\textwidth]{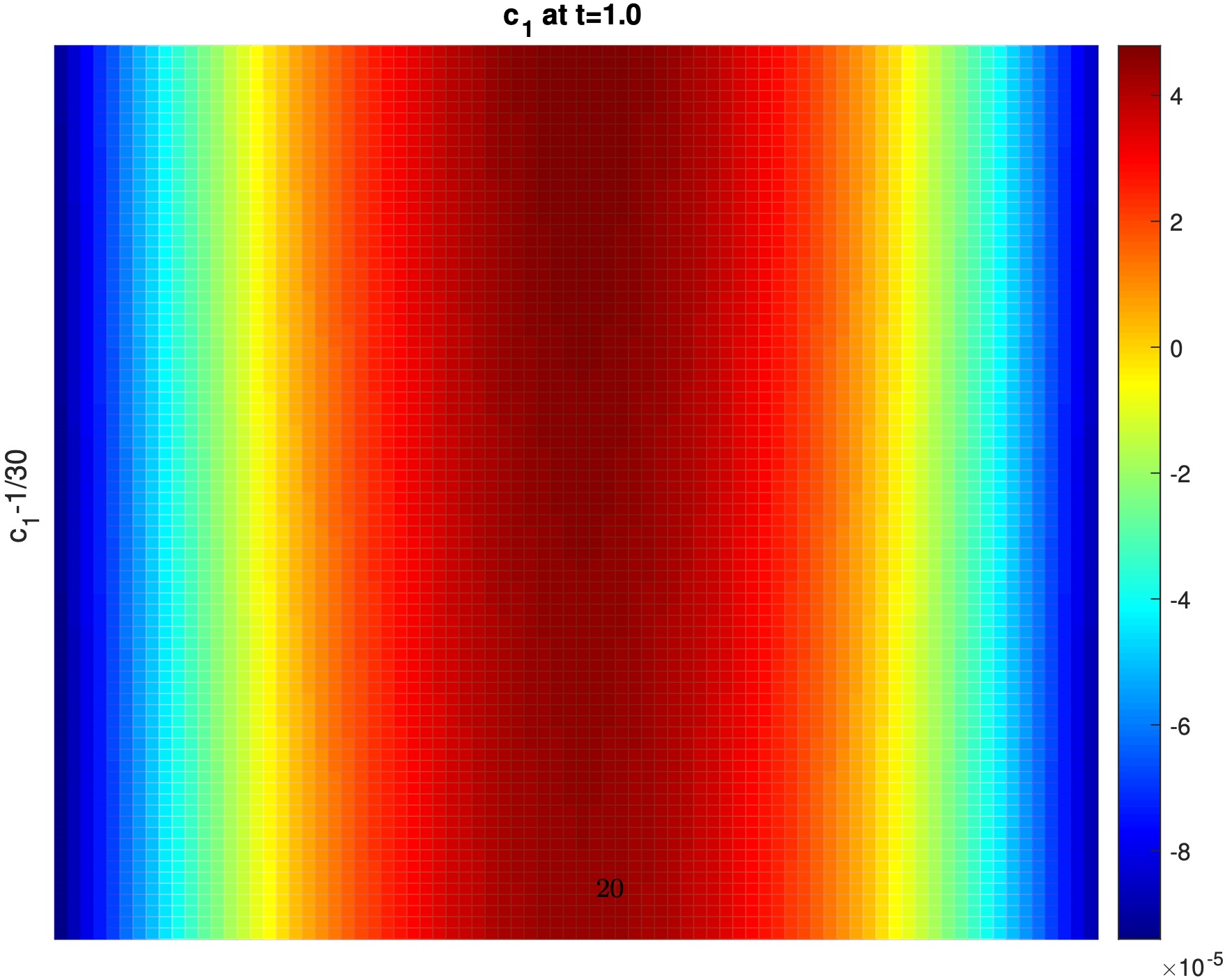}}
\subfigure{\includegraphics[width=0.325\textwidth]{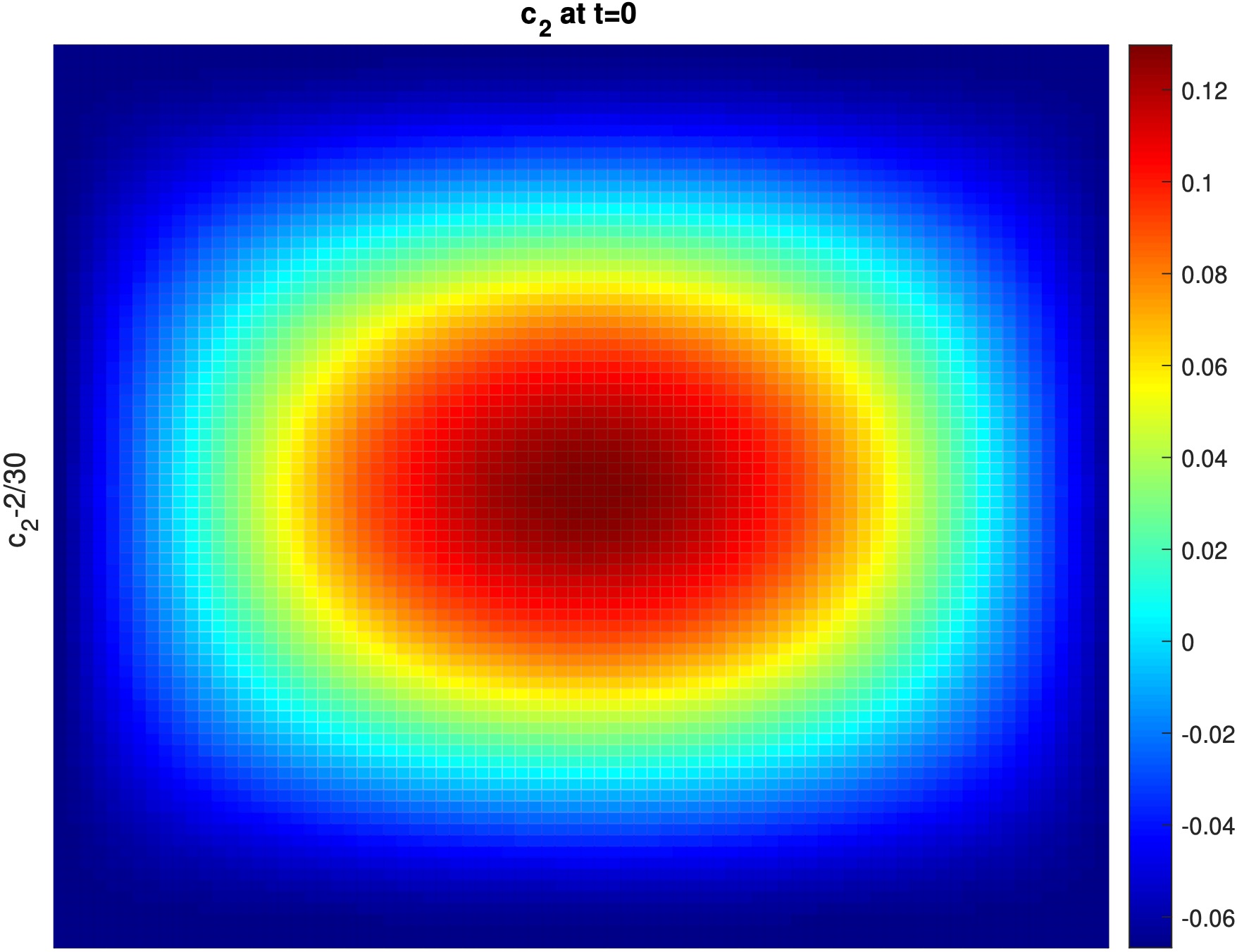}}
\subfigure{\includegraphics[width=0.325\textwidth]{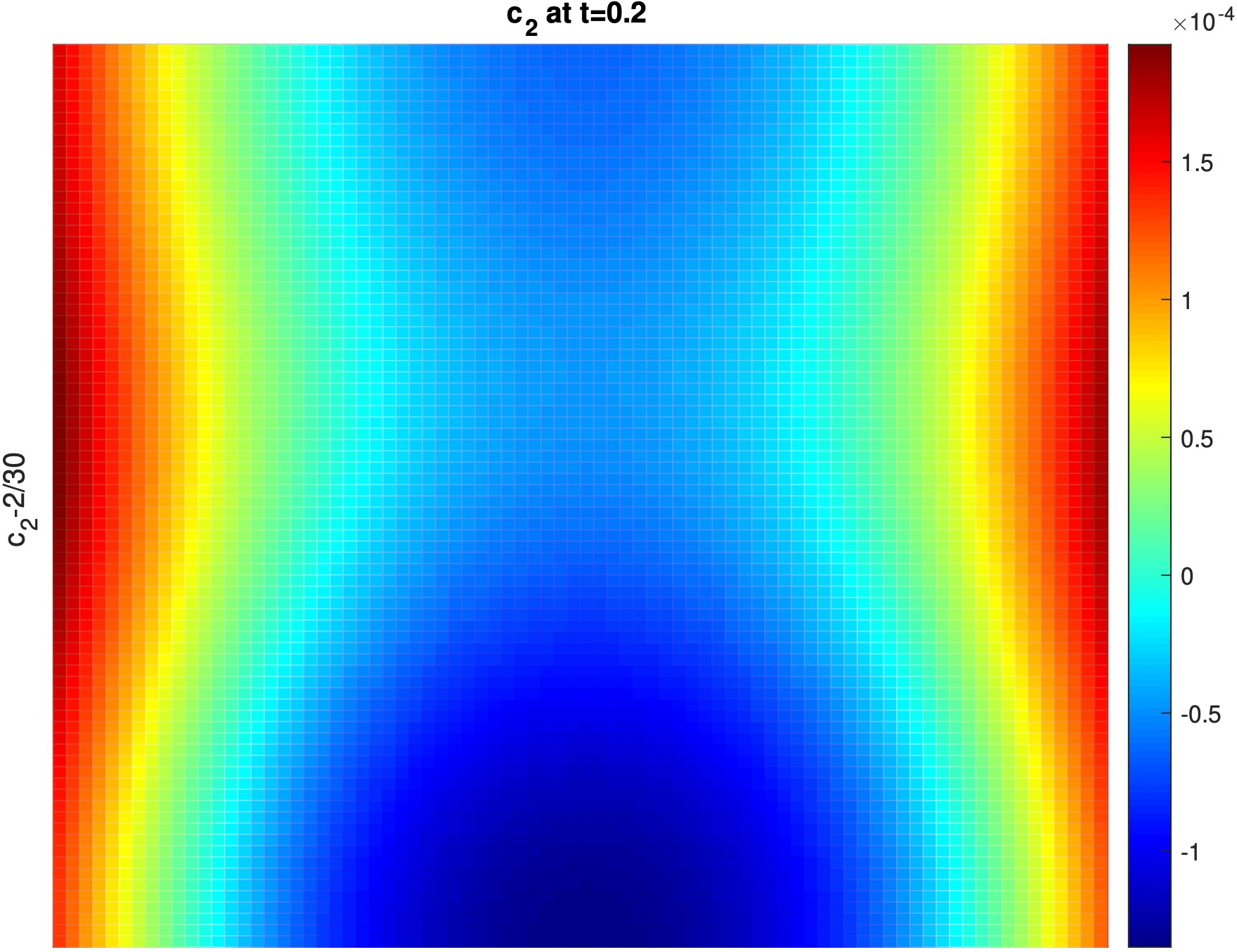}}
\subfigure{\includegraphics[width=0.325\textwidth]{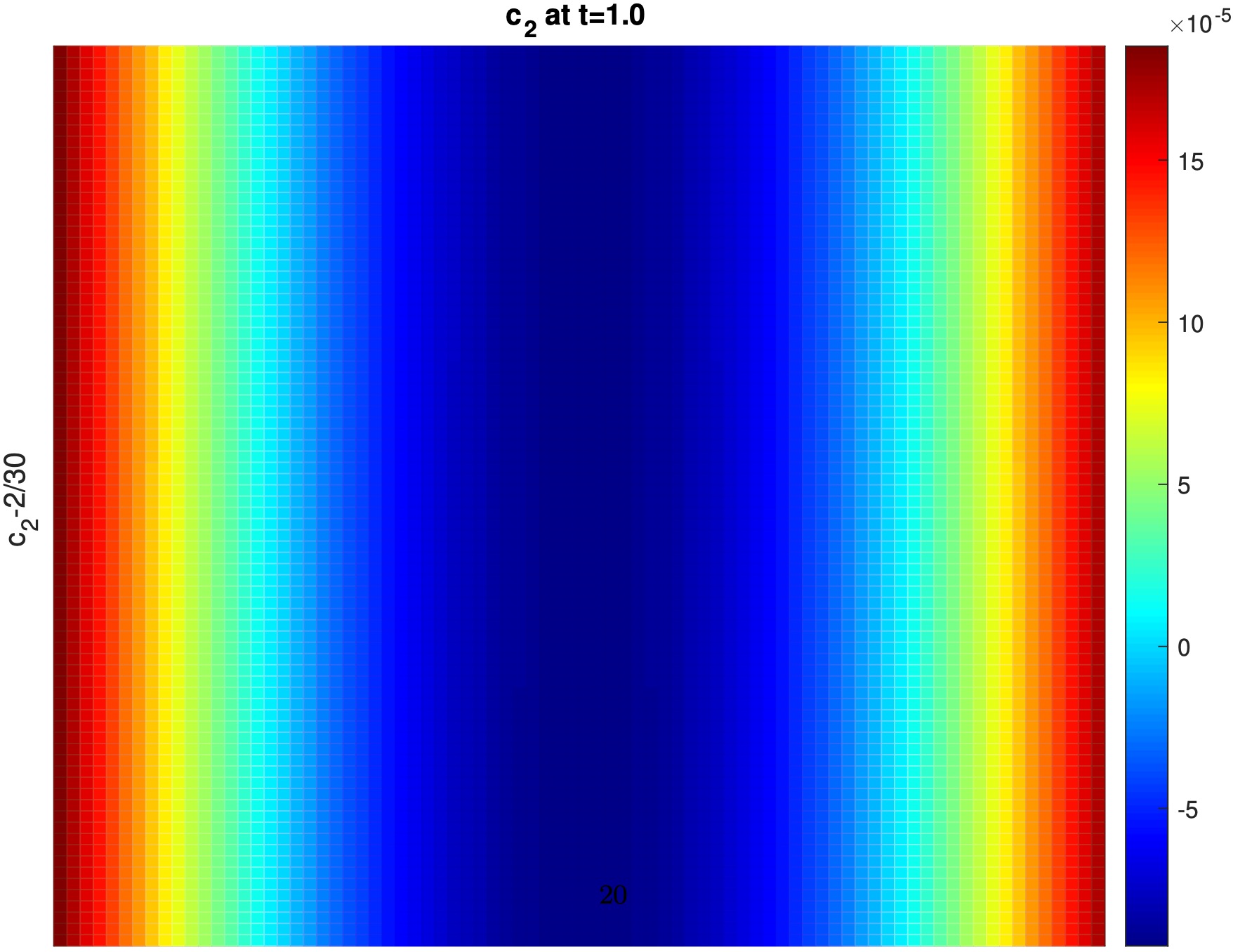}}
\subfigure{\includegraphics[width=0.325\textwidth]{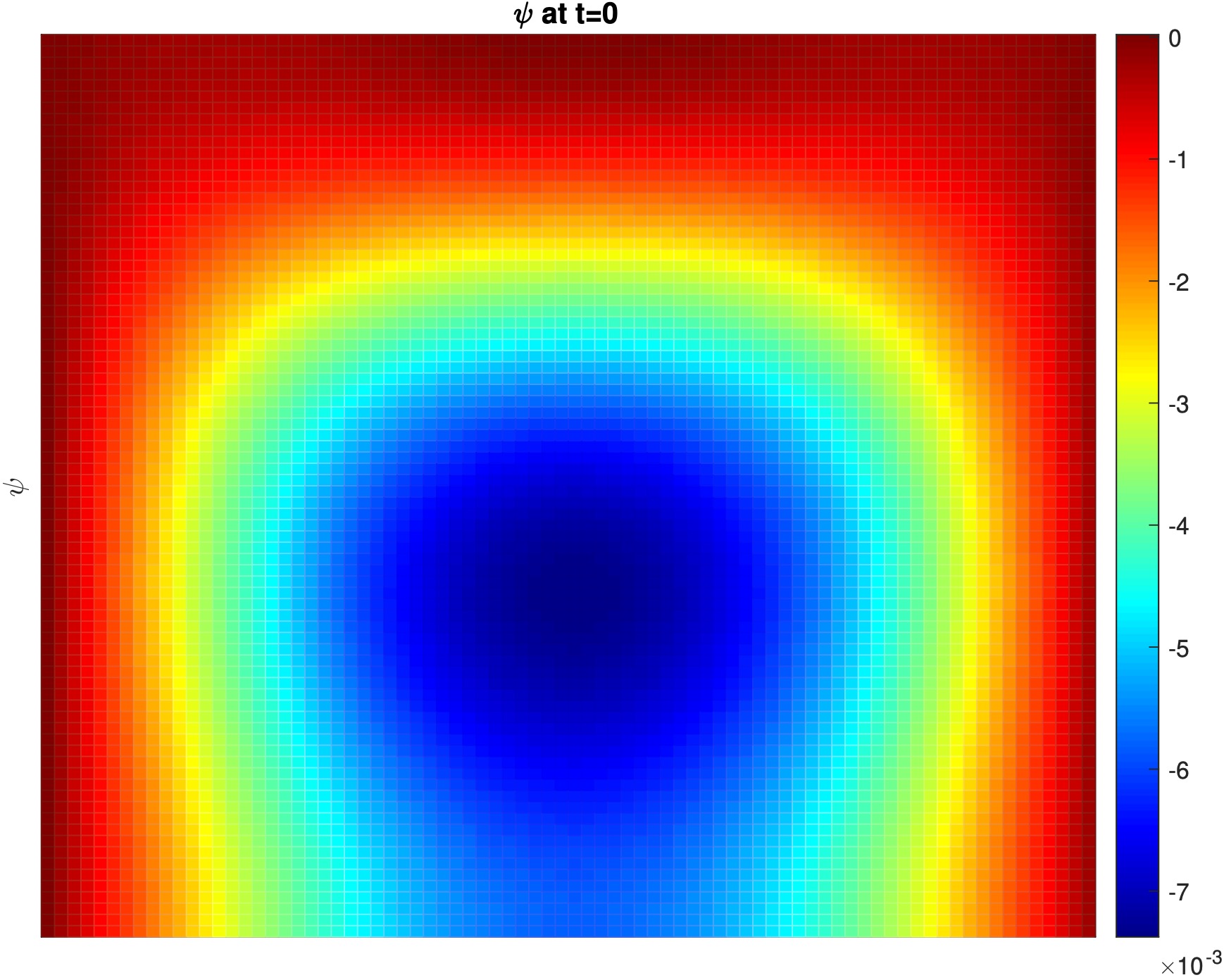}}
\subfigure{\includegraphics[width=0.325\textwidth]{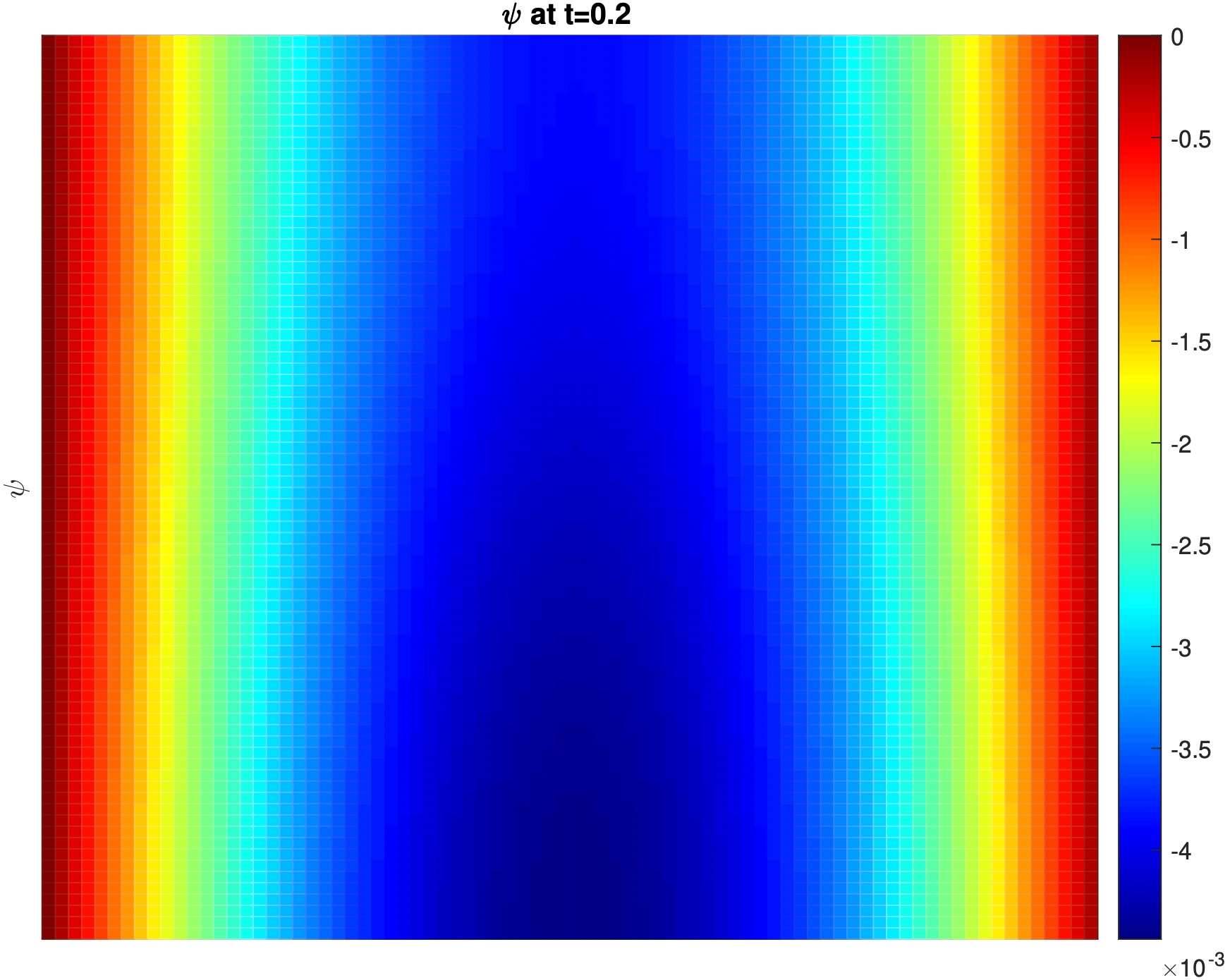}}
\subfigure{\includegraphics[width=0.325\textwidth]{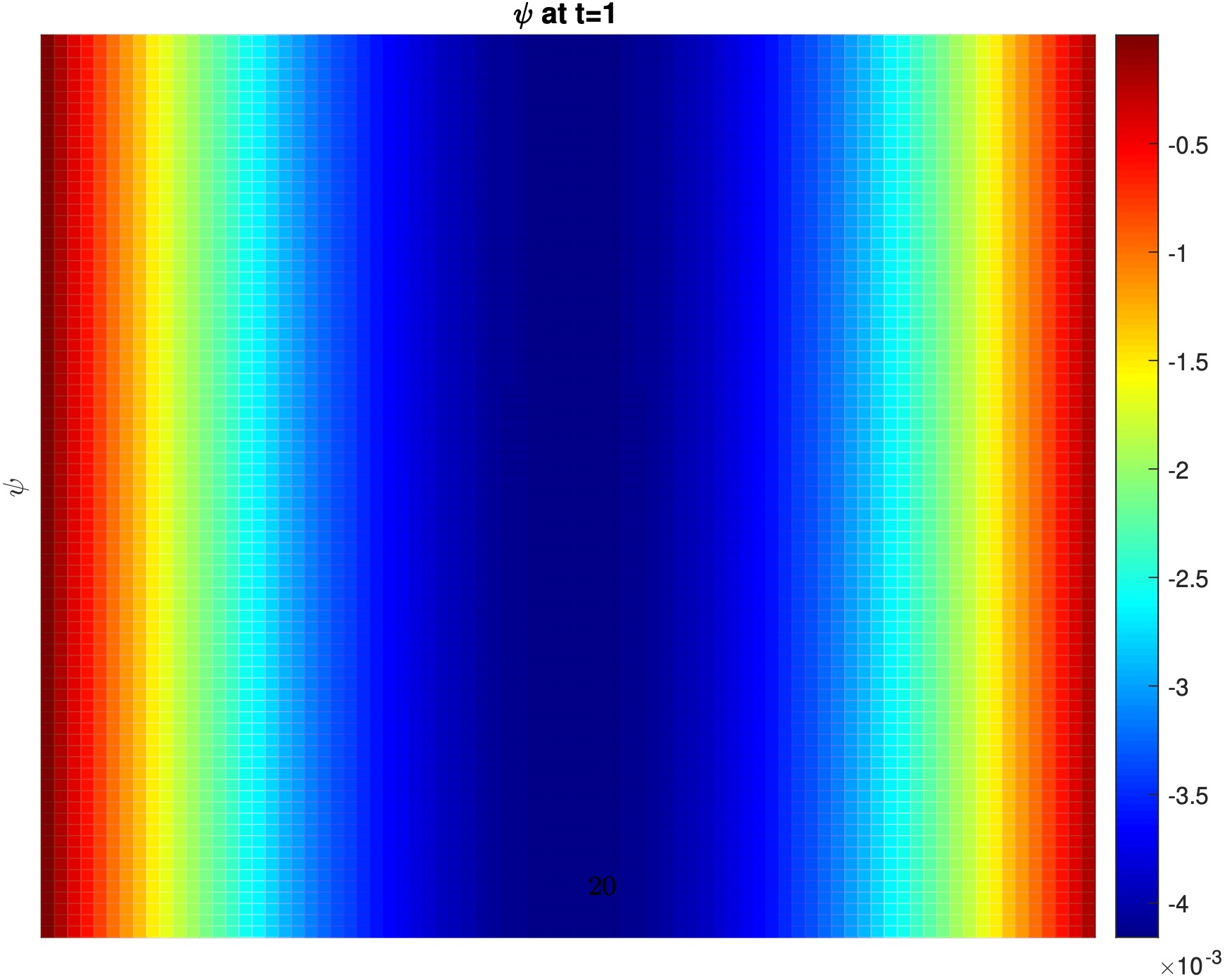}}
\caption{ The contours evolution of $c_1-1/30$, $c_2-2/30$ and $\psi$. } \label{ex5pattern}
\end{figure}

\section{Concluding remarks}

In this paper, we develop a novel positivity-preserving numerical flux to replace the DDG flux introduced in \cite{LW17} for the PNP equations.   
By employing Gauss-Lobatto quadrature in the new flux formulation, we establish that the scheme preserves positive cell-averaged concentrations under a forward Euler time discretization, provided a proper CFL condition is satisfied. To ensure point-wise positivity, which is crucial for the validity of our logarithmic reformulations, we apply a subsequent positivity-preserving reconstruction. 
This hybrid algorithm fulfills the goal of constructing  arbitrarily high-order (in space) positivity-preserving DDG schemes for the PNP equations.  

Our extensive numerical experiments 
in both one and two-dimensional settings
also demonstrate mass conservation and energy dissipation.
 However, due to the coupling between the positivity-preserving numerical flux and the reconstruction limiter, rigorously proving discrete energy dissipation remains a challenging task and is left for future work.

\bibliographystyle{abbrv}
\bibliography{PNP}

\end{document}